\documentclass[11pt,reqno]{amsart}

\usepackage[a4paper,margin=1in]{geometry}

\usepackage{amsmath,amssymb,amsfonts,amsthm,mathtools}
\usepackage{graphicx}
\usepackage{tikz}
\usetikzlibrary{cd}
\usepackage{xcolor}
\usepackage{url}
\usepackage{microtype}
\usepackage{hyperref}

\numberwithin{equation}{section}
\numberwithin{table}{section}
\numberwithin{figure}{section}

\newcommand{\Z}{\mathbb{Z}}

\DeclareMathOperator{\con}{con}
\DeclareMathOperator{\num}{num}
\DeclareMathOperator{\F}{F}

\theoremstyle{plain}
\newtheorem{thm}{Theorem}[section]

\newtheorem{lemma}[thm]{Lemma}
\newtheorem{proposition}[thm]{Proposition}
\newtheorem{corollary}[thm]{Corollary}

\theoremstyle{definition}
\newtheorem{definition}[thm]{Definition}
\newtheorem{example}[thm]{Example}

\theoremstyle{remark}

\title[Inverses of Fibonacci and Lucas Numbers]
{Inverses of Fibonacci and Lucas Numbers via Rational Indices}

\author{Zekiye Pinar Cihan}
\address{Department of Mathematics,
Bilecik Seyh Edebali University,
Bilecik, Turkey}
\email{pinarcihan@icloud.com}

\author{Ilker Inam}
\address{Department of Mathematics,
Bilecik Seyh Edebali University,
Bilecik, Turkey}
\email{ilker.inam@gmail.com}
\email{ilker.inam@bilecik.edu.tr}

\keywords{Codenominator Function, Fibonacci Numbers, Lucas Numbers}

\subjclass[2020]{11B39, 11B37, 11A55}

\begin{document}

\begin{abstract}
Fibonacci and Lucas numbers have been extensively studied in various algebraic and number-theoretic contexts, including their modular inverses and generalizations. Motivated by these developments, we study the inverses of Fibonacci and Lucas numbers with rational indices through the codenominator function $\F$. Uludağ and Gökmen (2022) showed that the rational-indexed Fibonacci number $F_X$, where $X\in\mathbb{Q}_{>0}$, can be expressed using $\F$, and that infinitely many such representations exist.

In this paper, we extend their work by deriving a general explicit formula for $F_X$ through the codenominator function. We establish precise conditions under which $F_X$ coincides with a Lucas number or deviates from the classical Fibonacci sequence. Moreover, by means of these generalized formulas, we compute the units of Fibonacci and Lucas numbers, thereby proving the existence of multiplicative inverses for all Fibonacci and Lucas numbers within this framework. These results reveal new structural properties of Fibonacci- and Lucas-related sequences and suggest further directions for number-theoretic exploration.
\end{abstract}

\maketitle

\section{Introduction}The arithmetic and structural properties of Fibonacci and Lucas sequences have long attracted attention across number theory. In particular, questions about divisibility, modular inverses, and generalized indexations of these classical sequences have produced a rich literature. Early work in this direction includes Duverney’s study of the irrationality of the sum of reciprocals of Fibonacci numbers \cite{duverney1997irrationalite}, while more recent contributions have examined modular and multiplicative inverses for Fibonacci-type sequences \cite{sanna2023inverse,song2019modular}. Parallel matrix-based investigations study determinants and generalized inverses of matrices built from Fibonacci and Lucas entries; see, for example \cite{shen2011determinants, shen2016moore}.The continued fraction is also playing a significant role in our formulas, which can be used in the many areas of number theory problems. For example, \cite{benjamin2005pythagorean} is an interesting paper which proves that every prime of the form $4m+1$ is the sum of the squares of two positive integers.The conumerator and the codenominator functions are other players we use for formulas of rational-indexed Fibonacci and Lucas numbers. Some significant papers where they are defined are: \cite{ULUDAG2022103192}, \cite{uludaug2022quantizations}. A different perspective was introduced by Uludağ and Gökmen \cite{ULUDAG2022103192}, who defined the conumerator and codenominator functions and used them to represent Fibonacci numbers at rational indices. Their framework shows that rational-indexed Fibonacci values can be expressed by the codenominator function $\F$, thereby providing a natural way to extend the classical sequence beyond integer indices.Main contributions of the present paper are the following:Motivated by their approach, we ask whether one can construct formulas for $F_X$ for arbitrary rational indices $X$, whether we can determine when such values coincide with classical Fibonacci or Lucas numbers, and whether the inverses of Fibonacci and Lucas numbers can also be described in this setting. In this paper we provide partial answers to these questions. More precisely, we derive explicit formulas for rational-indexed Fibonacci numbers $F_X$, where the continued fraction expansion of $X \in \mathbb{Q}_{>0}$ has length up to six. Using these formulas we (i) characterize when a rational-indexed Fibonacci number coincides with a Lucas number, (ii) identify when it diverges from the integer-indexed Fibonacci sequence, and (iii) explicitly prove that every Fibonacci and Lucas number has a multiplicative inverse via formulas we find with the common denominator function $\F$.
Our results both extend the rational-indexed viewpoint of Uludağ and Gökmen and open the way to further investigations of arithmetic properties of Fibonacci- and Lucas-related sequences under rational indexation.The paper is organized as follows. In Section~\ref{sec:1} we recall the conumerator/codenominator framework, fix notation, and collect required identities for Fibonacci and Lucas numbers. Section~\ref{sec:2} derives closed formulas for rational-indexed Fibonacci and Lucas numbers for continued fractions of small length. In Section~\ref{sec:3} we apply these formulas to compute inverses and units and state the main theorems and corollaries. We conclude with remarks on possible extensions and directions for future work.

\section{Preliminaries}\label{sec:1}\subsection{The codenominator function $F$}First of all, we define the numerator function:
\begin{definition}[Numerator Function] {\normalfont(\cite{ULUDAG2022103192})}For $X = \tfrac{p}{q}\in\mathbb{Q}_{>0}$ with $\gcd (p, q) = 1$, we define the \emph{numerator function}	\begin{equation*}		\num\!\left(\tfrac{p}{q}\right) = p.	\end{equation*}	Thus, $\num(p/q)$ simply returns the numerator of $X$ in the lowest terms.\end{definition}

It can be easily seen that it satisfies the following functional equations:\begin{equation}	\num(1) = 1,\end{equation}\begin{equation}\num(1+X) =\num(X) + \mathrm{num}\!\left(\tfrac{1}{X}\right),\end{equation}\begin{equation}	\mathrm{num}\!\left(\tfrac{X}{X+1}\right) =\mathrm{num}(X),\end{equation}\begin{equation}	\mathrm{num}(X) = X \, \mathrm{num}(1/X).\end{equation}

Secondly, we define the conumerator function.\begin{definition}[The conumerator function]{\normalfont(\cite{ULUDAG2022103192})}	The \emph{conumerator function} is the map	\begin{equation*}		\mathrm{con}: \mathbb{Q}_{>0} \longrightarrow \mathbb{Z}_{>0}	\end{equation*}	determined recursively by	\begin{align}\label{con:1}	\mathrm{con}(1+X) &:= \con(X) + \con(1/X),\\		\mathrm{con}\!\left(\tfrac{1}{1+X}\right) &:= \con(X),\label{con:2}	\end{align}	with initial condition $\con(1) := 1$. 	This function is uniquely defined and will serve as a companion to the numerator function.\end{definition}

Now, we define the codenominator function $\F$, which will play an important role in our assertions.\begin{definition}[Codenominator function]{\normalfont((\cite{ULUDAG2022103192})}	The \emph{codenominator function} is defined in terms of the conumerator by	\begin{equation}		\F(X) := \con(1/X), \qquad X\in \mathbb{Q}_{>0}.	\end{equation}	It satisfies 	\begin{align}		\F(1+1/X) &= \F(X) \iff \F(1+X) = \F(1/X) = \con(X),\label{F:1}\\		\F\!\left(\tfrac{1}{1+X}\right) &= \F(X) + \F(1/X),\label{F:2}\\		\F(1)& = 1,	\end{align}	and will play a central role in expressing rational-indexed Fibonacci and Lucas numbers.\end{definition}One can see that the relation between the codenominator function and the Fibonacci numbers is as follows:\begin{align}	\F(n)& = \F(1+1/n),\\	\F(1+n) & = \F(1/n),\\	\F(2+n) &= \F(1+(1+n)) =\F\!\left(\tfrac{1}{1+n}\right) = \F(n) + \F(1/n) = \F(n) + \F(n+1),\end{align}where $n$ is a positive integer. Thus, the codenominator function $\F$ extends the Fibonacci sequence to $\mathbb{Q}_{>0}$:\begin{equation}	\F(n) := F_n,\end{equation}where $F_n$ is the usual Fibonacci sequence for $n \in \mathbb{Z}_{>0}$ with $F_0 = 0$, $F_1=1$, defined by\begin{equation}	F_{n} = F_{n-2} + F_{n-1}.\end{equation}Hence we can write\begin{equation}	\F(n+2) := F_n + F_{n+1} = F_{n+2}.\end{equation}For the details on the codenominator function $\F$, see \cite{ULUDAG2022103192}.\subsection{Fibonacci and Lucas Sequences}\subsubsection{The Fibonacci Sequence}Fibonacci numbers are a well-known sequence that originated from Leonardo Fibonacci's observation of the rabbit population. The recursive definition of the Fibonacci sequence was written down by Johannes Kepler in the 16th century (\cite{koshy2017fibonacci}).\begin{definition}	Let $\{F\}_{n\geq 0}$ be the Fibonacci sequence with $F_0 = 0$, $F_1 = F_2 = 1$. The recursive definition of the Fibonacci sequence is	\begin{equation}		F_n = F_{n-1} + F_{n-2}, \quad n\geq 2.	\end{equation}\end{definition}The following theorem gives the Binet formula for the Fibonacci sequence, found by the French mathematician Jacques-Marie Binet in 1843 (see \cite[p.~90]{koshy2019fibonacci}).
\begin{thm}[Binet's Formula]\label{b:1}{\normalfont(\cite{koshy2017fibonacci})}	Let $\alpha$ be the positive root of $x^2 - x - 1 = 0$ and $\beta$ the negative root. Then 	\begin{equation}		F_n = \frac{\alpha^n-\beta^n}{\alpha - \beta}, \qquad \alpha = \tfrac{1+\sqrt{5}}{2}, \quad \beta = \tfrac{1-\sqrt{5}}{2},	\end{equation}	for $n\geq 1$.
\end{thm}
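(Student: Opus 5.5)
We argue by strong induction on $n$, using only the defining recurrence $F_n = F_{n-1}+F_{n-2}$ with $F_0=0$, $F_1=1$, and the fact that $\alpha$ and $\beta$ are the roots of $x^2-x-1=0$. Set $G_n := (\alpha^n-\beta^n)/(\alpha-\beta)$; the goal is to show $G_n = F_n$ for all $n\ge 1$. Because the recurrence is of second order, we must check two base cases and then show that $G_n$ obeys the same recurrence as $F_n$.

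\emph{Base cases.} First, $\alpha-\beta=\sqrt5\neq 0$, so $G_n$ is well defined. Then $G_1 = (\alpha-\beta)/(\alpha-\beta)=1=F_1$. For $n=2$ we have $G_2=(\alpha^2-\beta^2)/(\alpha-\beta)=\alpha+\beta$. By Vieta's formulas applied to $x^2-x-1$, $\alpha+\beta=1$, so $G_2=1=F_2$. One may also check $G_0=0=F_0$ directly, which allows the induction to begin at $n=2$ using the pair $(G_0,G_1)$.

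\emph{Inductive step.} Since $\alpha^2=\alpha+1$, multiplying by $\alpha^{n-2}$ gives $\alpha^n=\alpha^{n-1}+\alpha^{n-2}$. The same identity holds for $\beta$. Subtracting the two identities and dividing by $\alpha-\beta$ yields
\begin{equation*}
G_n = G_{n-1}+G_{n-2}, \qquad n\ge 2.
\end{equation*}
Now assume $G_k=F_k$ for all $k<n$. Then $G_n = F_{n-1}+F_{n-2} = F_n$, which completes the induction.

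No step here presents a real obstacle. The only point needing care is the use of the quadratic relation $x^2=x+1$ for both roots, since this is what transfers the Fibonacci recurrence to the powers $\alpha^n$ and $\beta^n$. As an alternative route, one could use the generating function $\sum F_n x^n = x/(1-x-x^2)$ and its partial fraction decomposition. The induction is shorter, however, and it uses nothing beyond the definition given above.
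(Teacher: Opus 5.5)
Your strong induction is correct and complete. The values $G_0=0$ and $G_1=1$, together with the recurrence $G_n=G_{n-1}+G_{n-2}$ obtained from $\alpha^2=\alpha+1$ and $\beta^2=\beta+1$, force $G_n=F_n$. The separate check of $G_2$ is redundant but harmless.

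The paper gives no proof of its own here; it quotes this classical result from Koshy. So there is nothing to compare against, beyond noting that yours is the standard textbook argument.

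One remark worth adding concerns negative indices. Since $\alpha,\beta\neq 0$, the relation $\alpha^n=\alpha^{n-1}+\alpha^{n-2}$ holds for every integer $n$. Alternatively, $\alpha\beta=-1$ gives $\alpha^{-1}=-\beta$ and hence $G_{-n}=(-1)^{n-1}G_n$. Either way, Binet's formula extends to all integers and matches $F_{-n}=(-1)^{n-1}F_n$ in \eqref{nf:1}. The paper tacitly relies on this extension, which lies outside the stated range $n\geq 1$. It does so at $n=-1$ in the proof of Lemma~\ref{id:2}, and again in Lemma~\ref{in:f2} whenever $m\leq 2$.
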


This theorem is very useful for computing Fibonacci numbers explicitly.\subsubsection{The Lucas Sequence}The Lucas sequence was constructed with different initial conditions but the same recurrence as the Fibonacci sequence.\begin{definition}	Let $\{L\}_{n\geq 0}$ be the Lucas sequence with $L_0 = 2$, $L_1 = 1$. The recursive definition is	\begin{equation}		L_n = L_{n-1} + L_{n-2}, \qquad n\geq 2.	\end{equation}	Another recurrence is	\begin{equation}		L_n = F_{n-1} + F_{n+1}, \qquad n\geq 1.	\end{equation}\end{definition}\begin{thm}[Binet's Formula]{\normalfont(\cite{koshy2017fibonacci})}\label{b:2}	For $n\geq 0$, the Binet formula for the Lucas sequence is	\begin{equation}		L_n = \alpha^n + \beta^n, \qquad \alpha = \tfrac{1+\sqrt{5}}{2}, \quad \beta = \tfrac{1-\sqrt{5}}{2}.	\end{equation}\end{thm}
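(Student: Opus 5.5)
Both closed forms follow from one quadratic-root argument. Since $\alpha$ and $\beta$ are the two roots of $x^2-x-1=0$, each satisfies $x^2=x+1$. Multiplying by $x^{n-2}$ gives, for every $n\geq 2$,
\[
\alpha^{n}=\alpha^{n-1}+\alpha^{n-2},\qquad \beta^{n}=\beta^{n-1}+\beta^{n-2}.
\]
It follows that any sequence of the form $c_n=A\alpha^n+B\beta^n$, with constants $A,B$, satisfies the recurrence $c_n=c_{n-1}+c_{n-2}$.

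The proof then proceeds by strong induction on $n$, comparing this sequence with $L_n$. Take $A=B=1$, so $c_n=\alpha^n+\beta^n$.

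\textbf{Base cases.} We have $c_0=1+1=2=L_0$. Also $c_1=\alpha+\beta=1=L_1$, by Vieta's formulas for $x^2-x-1$ (equivalently, $\tfrac{1+\sqrt5}{2}+\tfrac{1-\sqrt5}{2}=1$).

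\textbf{Inductive step.} Let $n\geq 2$ and assume $c_k=L_k$ for all $k<n$. Then
\[
c_n=c_{n-1}+c_{n-2}=L_{n-1}+L_{n-2}=L_n .
\]
This establishes $L_n=\alpha^n+\beta^n$ for all $n\geq 0$.

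The same scheme with $A=-B=1/(\alpha-\beta)=1/\sqrt5$ recovers Theorem~\ref{b:1}. Here the base cases are $c_0=0$ and $c_1=(\alpha-\beta)/(\alpha-\beta)=1$.

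As a consistency check, the second recurrence $L_n=F_{n-1}+F_{n+1}$ in the definition of the Lucas sequence can be combined with Theorem~\ref{b:1}. Using $\alpha^{-1}=-\beta$ and $\beta^{-1}=-\alpha$ (since $\alpha\beta=-1$), we get
\[
\alpha^{n-1}+\alpha^{n+1}=\alpha^n(\alpha-\beta).
\]
The analogous computation for $\beta$ gives
\[
\beta^{n-1}+\beta^{n+1}=\beta^n(\beta-\alpha).
\]
Substituting into $F_{n-1}+F_{n+1}$, the factor $\alpha-\beta$ cancels and $L_n=\alpha^n+\beta^n$ follows directly for $n\geq1$. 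One of the two routes suffices.

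No step presents a real obstacle. The only points needing care are verifying the initial values $L_0=2$ and $L_1=1$ exactly, and noting that the induction step needs two preceding terms, so both base cases are required.
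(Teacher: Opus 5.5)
Your proof is correct: since $\alpha^2=\alpha+1$ and $\beta^2=\beta+1$, the sequence $\alpha^n+\beta^n$ satisfies the Lucas recurrence and matches $L_0=2$, $L_1=1$, so strong induction completes the argument. Your alternative check via $L_n=F_{n-1}+F_{n+1}$ is also valid, provided that identity is taken as known, as the paper does by listing it in the definition. The paper gives no proof of this statement and only cites it from Koshy; your recurrence-plus-initial-values induction is the standard textbook argument, so there is nothing further to compare.
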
The Fibonacci and Lucas sequences with negative indices are given by\begin{align}	F_{-n} &= (-1)^{n-1}F_n, \label{nf:1}\\	L_{-n} &= (-1)^nL_n. \end{align}These formulas appear in \cite[p.~8]{koshy2019fibonacci}.	\section{ Fibonacci and Lucas Sequence with Rational Indices }\label{sec:2}	\subsection{Rational-Indexed Fibonacci Numbers}		The following lemmas describe how rational indices of Fibonacci numbers can be expressed in terms of the codenominator function $\F$.		\begin{lemma}\label{l:2}		Let $X = [n_0; n_1] = n_0 + \tfrac{1}{n_1}$, where $n_0, n_1$ are positive integers. 		Let $\F(\cdot)$ denote the codenominator function, and let $F_n$ denote the Fibonacci sequence. 		Then		\begin{equation}			\F([n_0; n_1]) = F_{n_0}F_{n_1} + F_{n_0-1}F_{n_1+1}.		\end{equation}	\end{lemma}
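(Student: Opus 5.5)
The plan is to fix the fractional part $Y=\tfrac{1}{n_1}$ and view $f(m):=\F(m+Y)$ as a function of the positive integer $m=n_0$. I will show that $f$ satisfies the Fibonacci recurrence, and then match its first two values with those of the right-hand side. The right-hand side $g(m):=F_{m}F_{n_1}+F_{m-1}F_{n_1+1}$ is a fixed linear combination of $F_m$ and $F_{m-1}$, so it also satisfies $g(m+2)=g(m+1)+g(m)$. Two sequences obeying the same second-order recurrence and agreeing at two consecutive indices agree everywhere, by induction on $m$.

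\textbf{Step 1 (recurrence).} For any $Z\in\mathbb{Q}_{>0}$, identity \eqref{F:1} gives $\F(1+Z)=\F(1/Z)$. Applying this to $1+Z$ and then using \eqref{F:2}, exactly as in the computation of $\F(2+n)$ in Section~\ref{sec:1}, gives
\begin{equation*}
\F(2+Z)=\F\!\left(\tfrac{1}{1+Z}\right)=\F(Z)+\F(1/Z)=\F(Z)+\F(1+Z).
\end{equation*}
Taking $Z=m+Y$ with $m\ge 1$ yields $f(m+2)=f(m)+f(m+1)$ for all $m\ge 1$. This step uses only that $Z$ is a positive rational, so it holds for every $n_1\ge 1$.

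\textbf{Step 2 (initial values).} First, $f(1)=\F(1+1/n_1)=\F(n_1)=F_{n_1}$ by \eqref{F:1} and $\F(n)=F_n$. Next, by Step 1 with $Z=1/n_1$, we get $f(2)=\F(1/n_1)+\F(n_1)$. Since $\F(1/n)=\F(1+n)$, this equals $F_{n_1+1}+F_{n_1}=F_{n_1+2}$. On the other side, $g(1)=F_1F_{n_1}+F_0F_{n_1+1}=F_{n_1}$ and $g(2)=F_2F_{n_1}+F_1F_{n_1+1}=F_{n_1+2}$. So $f$ and $g$ agree at $m=1,2$, and induction on $n_0$ finishes the proof.

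The only delicate point I expect is bookkeeping at the boundary. The case $n_1=1$ makes $X=n_0+1$ an integer, and the identities $\F(n)=F_n$ and $\F(1+n)=\F(1/n)$ must still be applied correctly there. In that case the formula reduces to $F_{n_0}+F_{n_0-1}=F_{n_0+1}$, which is consistent. The argument also never needs $F_{-1}$, because the induction starts at $m=1$ rather than $m=0$. Beyond that, the whole proof is the observation that $\F(\,\cdot+Y)$ is Fibonacci-recursive in the integer part.
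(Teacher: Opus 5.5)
Your proof is correct and is essentially the paper's argument: the paper repeatedly applies \eqref{F:1} and \eqref{F:2} to peel units off the integer part, accumulating Fibonacci coefficients until it reaches $F_{n_0-1}\F\bigl(\tfrac{1}{1+1/n_1}\bigr)+F_{n_0-2}\F\bigl(1+\tfrac{1}{n_1}\bigr)$, which is your identity $\F(2+Z)=\F(Z)+\F(1+Z)$ unrolled from the top. Your version, a second-order recurrence in $n_0$ with base cases $n_0=1,2$, is a cleaner way to make the paper's ``iterating this process'' rigorous, and it handles $n_0=1,2$ directly, where the paper's general unfolding (ending in $F_{n_0-2}$) degenerates.
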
		\begin{proof}		Let $X = \tfrac{p}{q}$ with $q \neq 0$, which can be expressed as the continued fraction $X = [n_0; n_1] = n_0 + \tfrac{1}{n_1}$. 		We compute the codenominator $\F(X)$ using equations~\eqref{F:1} and~\eqref{F:2}:		\begin{align*}			\F([n_0; n_1]) 			&= \F\!\left(n_0+\tfrac{1}{n_1}\right) 			= \F\!\left(1+\tfrac{(n_0-1)n_1+1}{n_1}\right) \\			&= \F\!\left(\tfrac{1}{1+\tfrac{(n_0-3)n_1+1}{n_1}}\right) 			+ \F\!\left(1+\tfrac{(n_0-3)n_1+1}{n_1}\right)\\			& = 2\F\!\left(\frac{1}{1+\frac{(n_0-4)n_1+1}{n_1}}\right) + \F\!\left(1+\frac{(n_0-4)n_1+1}{n_1}\right)\\			&=F_3\F\!\left(\frac{1}{1+\frac{(n_0-4)n_1+1}{n_1}}\right) + F_2 \F\!\left(1+\frac{(n_0-4)n_1+1}{n_1}\right),		\end{align*}		since $F_2 =1$, $F_3 = 2$. Iterating this process and repeatedly applying equations~\eqref{F:1} and~\eqref{F:2}, we obtain		\begin{align*}			\F([n_0; n_1]) 			&= F_{n_0-1} \, \F\!\left(\tfrac{1}{1+\tfrac{1}{n_1}}\right) 			+ F_{n_0-2} \, \F\!\left(1+\tfrac{1}{n_1}\right) \\			&= (F_{n_0-1}+F_{n_0-2})\F(n_1) + F_{n_0-1}\F(n_1+1) \\			&= F_{n_0}F_{n_1} + F_{n_0-1}F_{n_1+1},		\end{align*}		where we have used the recurrence $F_n = F_{n-1}+F_{n-2}$ and the relation $\F(n) = F_n$.	\end{proof}		\begin{lemma}\label{l:4}		Let $X=[n_0;n_1,n_2]=n_0+\tfrac{1}{n_1+\tfrac{1}{n_2}}$ with $n_0,n_1,n_2\in\mathbb{Z}_{>0}$. 		Let $\F(\cdot)$ denote the codenominator function, $F_n$ the Fibonacci numbers and $L_n$ the Lucas numbers. Then		\[		\F([n_0;n_1,n_2])		= (F_{n_1-1}L_{n_0} + F_{n_1-2}F_{n_0+1})\,F_{n_2}		+ (F_{n_1-2}L_{n_0} + F_{n_1-3}F_{n_0+1})\,F_{n_2+1}.		\]	\end{lemma}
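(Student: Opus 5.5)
The plan is to extract from the proof of Lemma~\ref{l:2} a general \emph{shift rule} for $\F$ and then apply it twice. Fix $Y\in\mathbb{Q}_{>0}$ and put $a_m:=\F(m+Y)$ for integers $m\ge 0$. By \eqref{F:1}, $a_1=\F(1+Y)=\F(1/Y)$. For $m\ge 2$, write $m+Y=1+(1+Z)$ with $Z=m-2+Y$. Then \eqref{F:1} and \eqref{F:2} give
\[
a_m=\F\!\left(\tfrac{1}{1+Z}\right)=\F(Z)+\F(1/Z)=\F(Z)+\F(1+Z)=a_{m-2}+a_{m-1}.
\]
So $(a_m)$ satisfies the Fibonacci recurrence with $a_0=\F(Y)$ and $a_1=\F(1/Y)$. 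A routine induction then yields
\[
\F(m+Y)=F_{m-1}\,\F(Y)+F_m\,\F(1/Y),\qquad m\ge 1,
\]
where the case $m=1$ uses $F_0=0$. As a sanity check, taking $Y=1/n_1$, with $\F(1/n_1)=F_{n_1+1}$ and $\F(n_1)=F_{n_1}$, recovers Lemma~\ref{l:2}.

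Next, apply the shift rule with $m=n_0$ and $Y=1/[n_1;n_2]$:
\[
\F([n_0;n_1,n_2])=F_{n_0-1}\,\F\!\left(\tfrac{1}{[n_1;n_2]}\right)+F_{n_0}\,\F([n_1;n_2]).
\]
The second term is given directly by Lemma~\ref{l:2}. For the first term, \eqref{F:1} gives $\F(1/[n_1;n_2])=\F(1+[n_1;n_2])=\F([n_1+1;n_2])$, and Lemma~\ref{l:2} again evaluates this as $F_{n_1+1}F_{n_2}+F_{n_1}F_{n_2+1}$. Collecting terms, the coefficient of $F_{n_2}$ is $F_{n_0-1}F_{n_1+1}+F_{n_0}F_{n_1}$, and the coefficient of $F_{n_2+1}$ is $F_{n_0-1}F_{n_1}+F_{n_0}F_{n_1-1}$.

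It remains to match these coefficients with the stated ones; this bookkeeping is the step where errors are most likely. For the first coefficient, substitute $L_{n_0}=F_{n_0-1}+F_{n_0+1}$ into $F_{n_1-1}L_{n_0}+F_{n_1-2}F_{n_0+1}$. Merging $F_{n_1-1}+F_{n_1-2}=F_{n_1}$ gives $F_{n_1-1}F_{n_0-1}+F_{n_1}F_{n_0+1}$. Splitting $F_{n_0+1}=F_{n_0}+F_{n_0-1}$ then gives
\[
F_{n_0-1}(F_{n_1-1}+F_{n_1})+F_{n_1}F_{n_0}=F_{n_0-1}F_{n_1+1}+F_{n_0}F_{n_1},
\]
as required. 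The second coefficient is the same identity with $n_1$ replaced by $n_1-1$.

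Finally, the small cases need care. When $n_1=1$ or $n_1=2$, the indices $n_1-2$ and $n_1-3$ become negative, so the statement should be read with $F_{-1}=1$ and $F_{-2}=-1$, as given by \eqref{nf:1}. The recurrence $F_n=F_{n-1}+F_{n-2}$ remains valid for all integers under this convention, so the manipulations above hold verbatim. The case $n_0=1$ is covered by $m=1$ in the shift rule.
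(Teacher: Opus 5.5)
Your proof is correct, but it is organized differently from the paper's.

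The paper's route:
\begin{itemize}
\item It peels off $n_0$ by iterating \eqref{F:1}--\eqref{F:2} without proof, reaching $F_{n_0-1}\F\bigl(\tfrac{1}{1+Y}\bigr)+F_{n_0-2}\F(1+Y)$ with $Y=\tfrac{n_2}{n_1n_2+1}$. This is your shift rule written in another basis, since $\F\bigl(\tfrac{1}{1+Y}\bigr)=\F(Y)+\F(1/Y)$.
\item Its displayed computation then keeps iterating through $n_1$. The grouping $L_{n_0}=F_{n_0-1}+F_{n_0+1}$ appears directly as a coefficient, and the computation finishes at $\F\bigl(\tfrac{1}{1+1/n_2}\bigr)$ and $\F(1+1/n_2)$.
\item The intermediate coefficient patterns are asserted rather than derived.
\end{itemize}

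Your route:
\begin{itemize}
\item You prove the one-step rule $\F(m+Y)=F_{m-1}\F(Y)+F_m\F(1/Y)$ by a genuine induction and apply it once.
\item You evaluate both resulting terms with Lemma~\ref{l:2}.
\item You then need an explicit identity to turn $F_{n_0-1}F_{n_1+1}+F_{n_0}F_{n_1}$ into $F_{n_1-1}L_{n_0}+F_{n_1-2}F_{n_0+1}$.
\end{itemize}

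What your version gains:
\begin{itemize}
\item Every step can be checked.
\item The shift rule also reproves Lemma~\ref{l:2} for all $n_0\ge 1$.
\item It is exactly the general recursion the paper later quotes from Uluda\u{g}--G\"okmen (Prop.~4), here proved rather than cited.
\item It shows the Lucas form is only a rewriting of the more natural Fibonacci-product coefficients.
\end{itemize}

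Your remark that $F_{-1}=1$ and $F_{-2}=-1$ are needed when $n_1\in\{1,2\}$ is a real clarification. The stated formula requires this convention, and the paper's own proof silently uses indices as low as $n_1-4$. The paper's route only saves you the separate coefficient check.
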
		\begin{proof}		Let $X_1 = \frac{p_1}{q_1}$ for $q_1\neq 0$, and it can be written as the continued fraction \begin{equation*} X_1 = [n_0; n_1, n_2] = n_0 + \frac{1}{n_1 + \frac{1}{n_2}}.\end{equation*} 					Compute $\F([n_0;n_1,n_2])$ by iteratively applying the defining relations~\eqref{F:1} and~\eqref{F:2} of the codenominator function to the continued fraction expansion. Grouping terms after $n_0-1$ iterations yields		\[		\F([n_0;n_1,n_2]) = F_{n_0-1}\F\!\Big(\frac{1}{1+\frac{n_2}{n_1n_2+1}}\Big)		+ F_{n_0-2}\F\!\Big(1+\frac{n_2}{n_1n_2+1}\Big).		\]		Now apply Lemma~\ref{l:2} to the two codenominator values on the right-hand side, which are continued fractions with leading index $1$ and denominator depending on $n_1,n_2$. Simplifying with the Fibonacci recurrence and using $L_{n}=F_{n+1}+F_{n-1}$ yields the stated formula, i.e.;	\begin{align*}			\F([n_0; n_1, n_2]) &= (F_{n_1-2}L_{n_0}+F_{n_1-3}F_{n_0+1})\F(\frac{1}{1+\frac{(n_1-n_1)n_2+1}{n_2}})\\ &+(F_{n_1-3}L_{n_0} + F_{n_1-4}F_{n_0+1})\F(1+\frac{(n_1-n_1)n_2+1}{n_2})\\			&= (F_{n_1-2}L_{n_0}+F_{n_1-3}F_{n_0+1})\F(\frac{1}{1+\frac{1}{n_2}})+(F_{n_1-3}L_{n_0} + F_{n_1-4}F_{n_0+1})\F(1+\frac{1}{n_2})\\			&= (F_{n_1-1}L_{n_0} + F_{n_1-2}F_{n_0+1})\F(n_2) + (F_{n_1-2}L_{n_0}+ F_{n_1-3}F_{n_0+1})\F(1+n_2)\\			& = (F_{n_1-1}L_{n_0} + F_{n_1-2}F_{n_0+1})F_{n_2} + (F_{n_1-2}L_{n_0}+ F_{n_1-3}F_{n_0+1})F_{n_2+1},		\end{align*}		since $F_{n_1-1} = F_{n_1-2} + F_{n_1-3}$ and $F_{n_1-2} = F_{n_1-3} + F_{n_1-4}$.	\end{proof}	\begin{lemma}\label{l:6}		Let $X=[n_0;n_1,n_2,n_3]$ with $n_i\in\mathbb{Z}_{>0}$. Let $\F(\cdot)$ denote the codenominator function, let $F_n$ denote the Fibonacci sequence, and let $L_n$ denote the Lucas sequence.		Then the rational-indexed Fibonacci number $F_X$ is given by		\begin{align*}			F_X &= \Big((F_{n_2-1}L_{n_1} + F_{n_2-2}F_{n_1+1})L_{n_0} 			+ (F_{n_2-1}L_{n_1-1} + F_{n_2-2}F_{n_1})F_{n_0+1}\Big)F_{n_3} \\			&\quad+ \Big((F_{n_2-2}L_{n_1} + F_{n_2-3}F_{n_1+1})L_{n_0} 			+ (F_{n_2-2}L_{n_1-1} + F_{n_2-3}F_{n_1})F_{n_0+1}\Big)F_{n_3+1}.		\end{align*}	\end{lemma}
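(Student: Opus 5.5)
The plan is to avoid unwinding the recursion by hand for each length of continued fraction. Instead I would first isolate a one-step ``transfer rule'' for the codenominator function $\F$, then apply it four times.

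\textbf{Step 1 (peeling off one partial quotient).} Fix a rational $Y>0$ and put $g(n)=\F(n+1/Y)$ for $n\ge 0$. From \eqref{F:1} and \eqref{F:2} we have $\F(2+Z)=\F(1/(1+Z))=\F(Z)+\F(1/Z)=\F(Z)+\F(1+Z)$. With $Z=n+1/Y$ this gives $g(n+2)=g(n)+g(n+1)$. The initial values are $g(0)=\F(1/Y)$ and $g(1)=\F(1+1/Y)=\F(Y)$. A short induction then gives
\[
\F\!\left(n+\tfrac1Y\right)=F_n\,\F(Y)+F_{n-1}\,\F(1/Y),\qquad n\ge 1 .
\]
This is exactly the computation in the proof of Lemma~\ref{l:2}, now with a general tail. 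Since $\F(1/X)=\F(1+X)$ by \eqref{F:1}, the same formula applied to $1+n+1/Y$ gives the companion value
\[
\F\!\left(1/(n+\tfrac1Y)\right)=F_{n+1}\,\F(Y)+F_n\,\F(1/Y).
\]
Thus the pair $(\F(X),\F(1/X))$ for $X=[n;Y]$ is the image of $(\F(Y),\F(1/Y))$ under the matrix $\begin{pmatrix}F_n&F_{n-1}\\F_{n+1}&F_n\end{pmatrix}$.

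\textbf{Step 2 (iterate).} Write $X=[n_0;Y_1]$, $Y_1=[n_1;Y_2]$, $Y_2=[n_2;n_3]$. Apply Step~1 three times, starting from the terminal values $\F(n_3)=F_{n_3}$ and $\F(1/n_3)=\F(1+n_3)=F_{n_3+1}$. This expresses $\F(X)$ as $A\,F_{n_3}+B\,F_{n_3+1}$. Here $A$ and $B$ are entries of the product of the three matrices attached to $n_0,n_1,n_2$, hence cubic expressions in Fibonacci numbers with indices near $n_0,n_1,n_2$. As a sanity check, the one-step case reproduces Lemma~\ref{l:2}. The two-step case should reproduce Lemma~\ref{l:4}, and I would use that lemma as the middle stage: apply Lemma~\ref{l:4} with $n_2$ replaced by the rational tail $[n_2;n_3]$, since its proof only used the values $\F(\text{tail})$ and $\F(1/\text{tail})$.

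\textbf{Step 3 (rewrite in the stated form).} Finally, collect the coefficients of $F_{n_3}$ and $F_{n_3+1}$ and bring them to the displayed shape. Group the $n_0$-dependence into $L_{n_0}$ and $F_{n_0+1}$, and group the $n_1$-dependence into $L_{n_1},F_{n_1+1}$ for the first block and $L_{n_1-1},F_{n_1}$ for the second. The tools are $L_n=F_{n-1}+F_{n+1}$, the recurrence, and, where needed, the addition formula $F_{m+n}=F_mF_{n+1}+F_{m-1}F_n$; if the bookkeeping becomes awkward, verify the final polynomial identity via Binet (Theorems~\ref{b:1},~\ref{b:2}).

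I expect this last step to be the main obstacle. The difficulty is reconciling the index conventions: the matrix product naturally produces $F_{n_0},F_{n_0-1}$, while the statement (following Lemma~\ref{l:4}) is phrased in terms of $L_{n_0}$ and $F_{n_0+1}$ with shifted indices $F_{n_2-1},F_{n_2-2},F_{n_2-3}$. The intended normalization must be pinned down carefully, checking small cases such as $n_i=1,2$ numerically against a direct computation of $\F$. For this I would use the negative-index convention \eqref{nf:1} when indices like $F_{n_2-3}$ dip below zero.
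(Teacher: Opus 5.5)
Your Steps 1 and 2 are correct. The transfer rule is exactly the one-step recursion $\F([n_0;n_1,\dots,n_k])=F_{n_0}\F([n_1;\dots,n_k])+F_{n_0-1}\F([n_1+1;\dots,n_k])$ used in the paper's general-formula section, and the two-step case does reproduce Lemma~\ref{l:4}. Write $P=F_{n_1-1}L_{n_0}+F_{n_1-2}F_{n_0+1}$ and $Q=F_{n_1-2}L_{n_0}+F_{n_1-3}F_{n_0+1}$ for the coefficients of Lemma~\ref{l:4}. Your argument then gives $\F(X)=(PF_{n_2}+QF_{n_2+1})F_{n_3}+(PF_{n_2-1}+QF_{n_2})F_{n_3+1}$.

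The gap is Step 3, and it is not a bookkeeping issue: the displayed formula is false, so no regrouping can reach it. Rewrite using $F_aF_b+F_{a-1}F_{b+1}=F_{b-1}L_a+F_{b-2}F_{a+1}$ (the identity behind Lemma~\ref{l:4}) with $a=n_1-1$ and $a=n_1-2$. For all $n_i\ge 1$, with the negative-index conventions, this gives
\begin{align*}
\F([n_0;n_1,n_2,n_3]) &= \Big((F_{n_2-1}L_{n_1-1}+F_{n_2-2}F_{n_1})L_{n_0}+(F_{n_2-1}L_{n_1-2}+F_{n_2-2}F_{n_1-1})F_{n_0+1}\Big)F_{n_3}\\
&\quad+\Big((F_{n_2-2}L_{n_1-1}+F_{n_2-3}F_{n_1})L_{n_0}+(F_{n_2-2}L_{n_1-2}+F_{n_2-3}F_{n_1-1})F_{n_0+1}\Big)F_{n_3+1}.
\end{align*}
The statement instead has $L_{n_1},F_{n_1+1}$ and $L_{n_1-1},F_{n_1}$ in those slots. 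Its right-hand side is in fact $\F([n_0;n_1+1,n_2,n_3])$.

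For a concrete failure with no negative indices, take $X=[1;2,3,1]=13/9$. Then $\F(13/9)=\F(9/4)=\F(4/5)=\F(1/4)+\F(4)=5+3=8$. The stated formula instead gives $7F_1+4F_2=11=\F([1;3,3,1])$. Similarly, $[1;2,3,2]=23/16$ gives $11$ against $15$. So the small-case check you planned would refute the lemma rather than pin down a normalization. What your argument actually proves is the corrected formula above.

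The paper's proof contains the matching slip. Where it ``uses Lemma~\ref{l:4}'', it attaches $P$ to $\F(1/(1+n_3/t))=\F(Y_2)+\F(1/Y_2)$ and $Q$ to $\F(1+n_3/t)=\F(Y_2)$, where $Y_2=[n_2;n_3]$. This produces $(P+Q)\F(Y_2)+P\F(1/Y_2)$, which is the coefficient pair for $n_1+1$, instead of $P\F(Y_2)+Q\F(1/Y_2)$. Consistent with this, the proof of Lemma~\ref{l:7} quotes the length-four formula with the corrected $n_1$-indices $L_{n_1-1},F_{n_1},L_{n_1-2},F_{n_1-1}$.
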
		\begin{proof}		We use properties of the codenominator function $\F$, namely equations (\ref{F:1}) and (\ref{F:2}). The proof is based on the same method as in Lemma \ref{l:4}:		\begin{equation*}			\F(X) = \F(n_0 + \frac{1}{n_1+\frac{1}{n_2+\frac{1}{n_3}}}) = \F(1 + \frac{(n_0-1)k + t}{k}),		\end{equation*}		where $k = n_1n_2n_3 + n_1 + n_3$, and $t = n_2n_3 + 1$. Using Lemma (\ref{l:2}), we get		\begin{align*}			\F(X) &= F_{n_0-1}\F(\frac{1}{1 + \frac{t}{k}}) + F_{n_0-2}\F(1 + \frac{t}{k})\\			&= F_{n_0-1}\F(\frac{1}{1+\frac{n_2n_3+1}{n_1n_2 n_3+ n_1 + n_3}}) + F_{n_0-2}\F(1+\frac{n_2n_3+1}{n_1n_2n_3+n_1+n_3}) \\			&= F_{n_0}\F(1+\frac{(n_1-1)t + n_3}{t}) + F_{n_0-1}\F(\frac{1}{1+\frac{(n_1-1)t + n_3}{t}}).		\end{align*}		Using Lemma (\ref{l:4}), we obtain	\[	\F(X) = \big(F_{n_1-1}L_{n_0} + F_{n_1-2}F_{n_0+1}\big)	\F\!\left(\frac{1}{1+\tfrac{n_3}{t}}\right)	+ \big(F_{n_1-2}L_{n_0} + F_{n_1-3}F_{n_0+1}\big)	\F\!\left(1+\tfrac{n_3}{t}\right).	\]		Simplifying, we get 	\[	\F(X) = \big(F_{n_1-1}L_{n_0} + F_{n_1-2}F_{n_0+1}\big)	\F\!\left(\frac{1}{1+\tfrac{(n_2-1)n_3+1}{n_3}}\right)	+ \big(F_{n_1}L_{n_0} + F_{n_1-1}F_{n_0+1}\big)	\F\!\left(1+\tfrac{(n_2-1)n_3+1}{n_3}\right),	\]	and hence		\[		\F(X) = \big(F_{n_1+1}L_{n_0} + F_{n_1}F_{n_0+1}\big)		\F\!\left(\frac{1}{1+\tfrac{(n_2-2)n_3+1}{n_3}}\right)		+ \big(F_{n_1-1}L_{n_0} + F_{n_1-2}F_{n_0+1}\big)		\F\!\left(1+\tfrac{(n_2-2)n_3+1}{n_3}\right).		\]		Now, using Lemmas (\ref{l:2}) and (\ref{l:4}), we deduce		\[		\begin{aligned}			\F(X) &=			\Big( (F_{n_2-2}L_{n_1} + F_{n_2-3}F_{n_1+1}) L_{n_0}			+ (F_{n_2-2}L_{n_1-1} + F_{n_2-3}F_{n_1}) F_{n_0} \Big)			\F\!\left(\tfrac{1}{1+\tfrac{1}{n_3}}\right) \\[6pt]			&\quad +			\Big( (F_{n_2-3}L_{n_1} + F_{n_2-4}F_{n_1+1}) L_{n_0}			+ (F_{n_2-3}L_{n_1-1} + F_{n_2-4}F_{n_1}) F_{n_0} \Big)			\F\!\left(1+\tfrac{1}{n_3}\right).		\end{aligned}		\]		Finally, applying the properties of the codenominator function $F$, we get 		\[		\begin{aligned}			\F(X) = F_X &=			\Big((F_{n_2-1}L_{n_1} + F_{n_2-2}F_{n_1+1}) L_{n_0}			+ (F_{n_2-1}L_{n_1-1} + F_{n_2-2}F_{n_1}) F_{n_0+1}\Big) F_{n_3} \\[6pt]			&\quad +			\Big((F_{n_2-2}L_{n_1} + F_{n_2-3}F_{n_1+1}) L_{n_0}			+ (F_{n_2-2}L_{n_1-1} + F_{n_2-3}F_{n_1}) F_{n_0+1}\Big) F_{n_3+1}.		\end{aligned}		\]		Thus the proof is completed.	\end{proof}	\begin{lemma}\label{l:7}			Let $X=[n_0;n_1,n_2,n_3, n_4]$ with $n_i\in\mathbb{Z}_{>0}$. Let $\F(\cdot)$ denote the codenominator function, let $F_n$ denote the Fibonacci sequence, and let $L_n$ denote the Lucas sequence. 		Then the rational-indexed Fibonacci number $F_X$ is given by			\begin{align*}				F_X :&= (((F_{n_3-2}F_{n_2-1} + F_{n_3-1}L_{n_2-2})F_{n_1} + (F_{n_3-2}F_{n_2} + F_{n_3-1}L_{n_2-1})L_{n_1-1})L_{n_0}\\				& + ((F_{n_3-2}F_{n_2-1} + F_{n_3-1}L_{n_2-2})F_{n_1-1} + (F_{n_3-2}F_{n_2} + F_{n_3-1}L_{n_2-1})L_{n_1-2})F_{n_0+1})F_{n_4}\\				&+(((F_{n_3-3}F_{n_2-1} + F_{n_3-2}L_{n_2-2})F_{n_1} + (F_{n_3-3}F_{n_2} + F_{n_3-2}L_{n_2-1})L_{n_1-1})L_{n_0}\\				& + ((F_{n_3-3}F_{n_2-1} + F_{n_3-2}L_{n_2-2})F_{n_1-1} + (F_{n_3-3}F_{n_2} + F_{n_3-2}L_{n_2-1})L_{n_1-2})F_{n_0+1})F_{n_4+1}.			\end{align*}	\end{lemma}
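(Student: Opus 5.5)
The plan is to organize the iterated use of \eqref{F:1} and \eqref{F:2} as a product of $2\times 2$ matrices. Lemmas~\ref{l:2}, \ref{l:4} and \ref{l:6} already do this implicitly. For $X\in\mathbb{Q}_{>0}$ I will use the vector $v(X)=(\F(X),\F(X+1))^{T}$.

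\emph{Step 1: a one-step transfer law.} Fix $Y\in\mathbb{Q}_{>0}$ and put $a_k=\F(k+1/Y)$ for $k\ge 1$, with $a_0:=\F(1/Y)=\F(1+Y)$ by \eqref{F:1}.
\begin{itemize}
\item By \eqref{F:1}, $a_1=\F(1+1/Y)=\F(Y)$.
\item By \eqref{F:1} and then \eqref{F:2} applied with $Z=k+1/Y$,
\[
a_{k+2}=\F\bigl(1+(k+1+1/Y)\bigr)=\F\Bigl(\tfrac{1}{1+(k+1/Y)}\Bigr)=\F(k+1/Y)+\F\Bigl(\tfrac{1}{k+1/Y}\Bigr).
\]
Since $\F(1/Z)=\F(1+Z)$, the last term equals $a_{k+1}$, so $a_{k+2}=a_{k+1}+a_k$.
\end{itemize}
A short induction then gives
\[
\F(n+1/Y)=F_n\F(Y)+F_{n-1}\F(Y+1), \qquad \F(n+1+1/Y)=F_{n+1}\F(Y)+F_n\F(Y+1).
\]
This is exactly the mechanism behind Lemma~\ref{l:2}. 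In matrix form it reads $v(n+1/Y)=M_n\,v(Y)$, where
\[
M_n=\begin{pmatrix}F_n&F_{n-1}\\ F_{n+1}&F_n\end{pmatrix}.
\]

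\emph{Step 2: iterate.} For $X=[n_0;n_1,n_2,n_3,n_4]$, apply Step 1 successively with $Y=[n_1;n_2,n_3,n_4]$, then $[n_2;n_3,n_4]$, then $[n_3;n_4]$. Ending with $v(n_4)=(F_{n_4},F_{n_4+1})^T$, this gives
\[
\F(X)=e_1^{T}M_{n_0}M_{n_1}M_{n_2}M_{n_3}\begin{pmatrix}F_{n_4}\\ F_{n_4+1}\end{pmatrix}.
\]
So $\F(X)=A\,F_{n_4}+B\,F_{n_4+1}$, where $(A,B)$ is the first row of $M_{n_0}M_{n_1}M_{n_2}M_{n_3}$. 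This already has the shape of the claimed formula.

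\emph{Step 3: identify the coefficients.} I would multiply out from the inside in the same nesting as the statement. First compute $M_{n_2}M_{n_3}$, whose entries are sums such as $F_{n_2}F_{n_3}+F_{n_2-1}F_{n_3+1}$. Then rewrite these with $L_m=F_{m-1}+F_{m+1}$ and the addition law $F_{m+n}=F_mF_{n+1}+F_{m-1}F_n$ to produce the paper's combinations of the form $F_{n_3-2}F_{n_2-1}+F_{n_3-1}L_{n_2-2}$. Next I multiply by $M_{n_1}$ and then take the first row of $M_{n_0}$, matching the $L_{n_0}$ and $F_{n_0+1}$ factors. The cleanest route is to reuse Lemma~\ref{l:6}. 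Writing $X=[n_0;n_1,n_2,Z]$ with $Z=[n_3;n_4]=n_3+1/n_4$, Lemma~\ref{l:6}'s identity should hold with the last entry rational, since its proof used only \eqref{F:1} and \eqref{F:2}. I would then substitute $\F(Z)$ and $\F(Z+1)$ from Lemma~\ref{l:2} and regroup, so that the coefficient of $F_{n_4}$ and of $F_{n_4+1}$ each collapse to the displayed brackets.

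\emph{Main obstacle.} The hard part is the index bookkeeping in Step 3: reconciling the canonical entries of the matrix product with the specific mixed Fibonacci/Lucas expressions in the statement, which involve shifted indices like $L_{n_1-2}$ and $F_{n_3-3}$. Before any symbolic manipulation, I would check the target identity numerically on small tuples, e.g.\ all $n_i\in\{1,2,3\}$, with the convention $F_{-n}=(-1)^{n-1}F_n$ from \eqref{nf:1} for negative indices. If the check passes, I would verify the two coefficient identities by expanding both sides through Binet's formulas, Theorems~\ref{b:1} and~\ref{b:2}. If a discrepancy appears, the identity $\F(X)=e_1^TM_{n_0}\cdots M_{n_3}(F_{n_4},F_{n_4+1})^T$ from Step 2 is the reliable statement to fall back on and correct against.
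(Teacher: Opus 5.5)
Your Steps 1 and 2 are correct. The transfer law $v(n+1/Y)=M_n v(Y)$ is a cleaner packaging of what the paper does by iterating \eqref{F:1} and \eqref{F:2}. But these steps only give $\F(X)=AF_{n_4}+BF_{n_4+1}$, with $(A,B)$ the first row of $M_{n_0}M_{n_1}M_{n_2}M_{n_3}$. The entire content of the lemma is that $A$ and $B$ equal the displayed brackets, and Step 3 leaves exactly this undone.

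Worse, the route you call cleanest breaks, because Lemma~\ref{l:6} as printed is false: its $n_1$-indices are shifted up by one. For $[3;3,3,3]=109/33$, your recursion gives first row $(26,15)$ of $M_3^3$, so $\F=26\cdot 2+15\cdot 3=97$; a direct computation with \eqref{F:1} and \eqref{F:2} confirms this. The right side of Lemma~\ref{l:6}, however, is $43\cdot 2+25\cdot 3=161$. Substituting it with $Z=n_3+1/n_4$ therefore yields the target with $n_1$ replaced by $n_1+1$. For all $n_i=3$ that gives coefficient $161$ on $F_{n_4}$ instead of the correct $97$, and no regrouping will collapse it to the displayed brackets. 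The paper's own proof cites Lemma~\ref{l:6} but silently uses the correctly shifted coefficients $L_{n_1-1},F_{n_1},L_{n_1-2},F_{n_1-1}$. Your Binet fallback would work in principle, but as written it is a promise, not a proof.

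The missing computation is short once you have the matrix form.
\begin{enumerate}
\item The only identity needed is $F_aF_b+F_{a-1}F_{b+1}=L_aF_{b-1}+F_{a+1}F_{b-2}$, valid for all integers. To get it, expand $F_{b+1}$ and then $F_b$ by the recurrence.
\item Put $P(m)=F_{n_0}F_m+F_{n_0-1}F_{m+1}$ and $G(m)=F_mF_{n_3}+F_{m-1}F_{n_3+1}$. Both satisfy the Fibonacci recurrence in $m$, and $e_1^TM_{n_0}M_{n_1}=(P(n_1),P(n_1-1))$, $M_{n_2}M_{n_3}e_1=(G(n_2),G(n_2+1))^T$. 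Hence $A=P(n_1)G(n_2)+P(n_1-1)G(n_2+1)$.
\item By the identity, the inner brackets of the statement are $a_1=G(n_2-2)$ and $a_2=G(n_2-1)$. So $G(n_2)=a_1+a_2$, $G(n_2+1)=a_1+2a_2$, and $A=a_1P(n_1+1)+a_2\bigl(P(n_1+1)+P(n_1-1)\bigr)$.
\item The identity again gives $P(n_1+1)=L_{n_0}F_{n_1}+F_{n_0+1}F_{n_1-1}$ and $P(n_1+1)+P(n_1-1)=L_{n_0}L_{n_1-1}+F_{n_0+1}L_{n_1-2}$. This is exactly the displayed coefficient of $F_{n_4}$.
\item The second column of $M_{n_3}$ is the first with $n_3$ replaced by $n_3-1$. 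So $B$ is $A$ with $n_3\to n_3-1$, which is the displayed coefficient of $F_{n_4+1}$.
\end{enumerate}
Indices such as $F_{n_3-3}$ or $L_{n_1-2}$ become negative when some $n_i\le 2$; read them with $F_{-n}=(-1)^{n-1}F_n$ and $L_{-n}=(-1)^nL_n$.
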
	\begin{proof}	The proof is the same as the proof of Lemma \ref{l:6}. First of all, we define the codenominator function $\F(X)$:		\begin{equation*}			\F(X) = \F(n_0 + \frac{1}{n_1+\frac{1}{n_2+\frac{1}{n_3+\frac{1}{n_4}}}}) = \F(1+\frac{(n_0-1)y + z}{y}),		\end{equation*}		where $y = n_1n_2n_3n_4 + n_1n_3 + n_1n_4 + n_3n_4 + 1$, and $z = n_2n_3n_4 + n_3 + n_4$. Now applying Lemma \ref{l:6} here, we deduce		\begin{align*}				\F(X) &= ((F_{n_2-2}L_{n_1-1} + F_{n_2-3}F_{n_1})L_{n_0} + (F_{n_2-2}L_{n_1-2} + F_{n_2-3}F_{n_1-1})F_{n_0+1})\F(\frac{1}{1+\frac{n_4}{n_3n_4+1}}) \\			&+ ((F_{n_2-3}L_{n_1-1} + F_{n_2-4}F_{n_1})L_{n_0} + (F_{n_2-3}L_{n_1-2} + F_{n_2-4}F_{n_1-1})F_{n_0+1})\F(1+\frac{n_4}{n_3n_4+1})\\			&= ((F_{n_2-1}L_{n_1-1} + F_{n_2-2}F_{n_1})L_{n_0} + (F_{n_2-1}L_{n_1-2} + F_{n_2-2}F_{n_1-1})F_{n_0+1})\F(1 + \frac{(n_3-1)n_4+1}{n_4})\\			&+ ((F_{n_2-2}L_{n_1-1} + F_{n_2-3}F_{n_1})L_{n_0} + (F_{n_2-2}L_{n_1-2} + F_{n_2-3}F_{n_1-1})F_{n_0+1})\F(\frac{1}{1+\frac{(n_3-1)n_4+1}{n_4}})\\			&= ((F_{n_2}L_{n_1-1} + F_{n_2-1}F_{n_1})L_{n_0} + (F_{n_2}L_{n_1-2} + F_{n_2-1}F_{n_1-1})F_{n_0+1})\F( \frac{1}{1+\frac{(n_3-2)n_4+1}{n_4}})\\			&+ ((F_{n_2-2}L_{n_1-1} + F_{n_2-3}F_{n_1})L_{n_0} + (F_{n_2-2}L_{n_1-2} + F_{n_2-3}F_{n_1-1})F_{n_0+1})\F(1+\frac{(n_3-2)n_4+1}{n_4}).		\end{align*}		Using the properties of codenominator function $\F$, it follows that		\begin{align*}			\F(X) &= (((F_{n_3-3}F_{n_2-1} + F_{n_3-2}L_{n_2-2})F_{n_1} + (F_{n_3-3}F_{n_2} + F_{n_3-2}L_{n_2-1})L_{n_1-1})L_{n_0}\\			& + ((F_{n_3-3}F_{n_2-1} + F_{n_3-2}L_{n_2-2})F_{n_1-1} + (F_{n_3-3}F_{n_2} + F_{n_3-2}L_{n_2-1})L_{n_1-2})F_{n_0+1})\F(\frac{1}{1+\frac{1}{n_4}})\\			&+(((F_{n_3-4}F_{n_2-1} + F_{n_3-3}L_{n_2-2})F_{n_1} + (F_{n_3-4}F_{n_2} + F_{n_3-3}L_{n_2-1})L_{n_1-1})L_{n_0}\\			& + ((F_{n_3-4}F_{n_2-1} + F_{n_3-3}L_{n_2-2})F_{n_1-1} + (F_{n_3-4}F_{n_2} + F_{n_3-3}L_{n_2-1})L_{n_1-2})F_{n_0+1})\F(1+\frac{1}{n_4}).		\end{align*}		Thus we get		\begin{align*}			\F(X) = F_X &= (((F_{n_3-2}F_{n_2-1} + F_{n_3-1}L_{n_2-2})F_{n_1} + (F_{n_3-2}F_{n_2} + F_{n_3-1}L_{n_2-1})L_{n_1-1})L_{n_0}\\			& + ((F_{n_3-2}F_{n_2-1} + F_{n_3-1}L_{n_2-2})F_{n_1-1} + (F_{n_3-2}F_{n_2} + F_{n_3-1}L_{n_2-1})L_{n_1-2})F_{n_0+1})F_{n_4}\\			&+(((F_{n_3-3}F_{n_2-1} + F_{n_3-2}L_{n_2-2})F_{n_1} + (F_{n_3-3}F_{n_2} + F_{n_3-2}L_{n_2-1})L_{n_1-1})L_{n_0}\\			& + ((F_{n_3-3}F_{n_2-1} + F_{n_3-2}L_{n_2-2})F_{n_1-1} + (F_{n_3-3}F_{n_2} + F_{n_3-2}L_{n_2-1})L_{n_1-2})F_{n_0+1})F_{n_4+1}.		\end{align*}		The proof has been already completed by the last equation.	\end{proof}		
	\section{The General Formula of The Rational-Indexed Fibonacci Numbers}Next lemma is written as a result of \cite[prop.~4]{ULUDAG2022103192}
	\begin{lemma}	\begin{equation*}		\F([n_0; n_1, \cdots, n_k]) = F_{n_0}\F([n_1; n_2,\cdots, n_k]) + F_{n_0-1}\F([n_1+1; n_2,\cdots, n_k]).	\end{equation*}\end{lemma}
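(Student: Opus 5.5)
We will prove the identity by the same peeling procedure used in Lemma~\ref{l:2}, now carried out on a general tail. Write $Y=[n_1;n_2,\dots,n_k]$, so that $X=[n_0;n_1,\dots,n_k]=n_0+1/Y$ with $Y>1$ (or $Y\ge 1$). The goal is an identity of the form
\[
\F\!\left(m+\tfrac{1}{Y}\right)=F_{m}\,\F(Y)+F_{m-1}\,\F(1+Y)\qquad (m\ge 1).
\]
Since $1+Y=[n_1+1;n_2,\dots,n_k]$, this is exactly the statement. We will prove it by induction on $m$.

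\emph{Base cases.} For $m=1$, relation \eqref{F:1} gives $\F(1+1/Y)=\F(Y)$, which matches $F_1\F(Y)+F_0\F(1+Y)=\F(Y)$. For $m=2$, write $2+1/Y=1+(1+1/Y)$. Then \eqref{F:1} and \eqref{F:2} give
\[
\F\!\left(1+(1+1/Y)\right)=\F\!\left(\tfrac{1}{1+1/Y}\right)=\F(1/Y)+\F(Y).
\]
Moreover $\F(1/Y)=\F(1+Y)$ by \eqref{F:1}, applied in the form $\F(1+Z)=\F(1/Z)$ with $Z=Y$. Hence $\F(2+1/Y)=\F(Y)+\F(1+Y)=F_2\F(Y)+F_1\F(1+Y)$.

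\emph{Inductive step.} For $m\ge 3$, put $Z=(m-2)+1/Y$. Then $m+1/Y=1+(1+Z)$, and the same two moves give
\[
\F(m+1/Y)=\F\!\left(\tfrac{1}{1+Z}\right)=\F(Z)+\F(1/Z)=\F(Z)+\F(1+Z).
\]
Here $\F(1/Z)=\F(1+Z)$ again comes from \eqref{F:1}. This yields the three-term recurrence
\[
G(m)=G(m-1)+G(m-2),\qquad G(m):=\F(m+1/Y),
\]
which is the Fibonacci recurrence in $m$. The right-hand side $F_m\F(Y)+F_{m-1}\F(1+Y)$ satisfies the same recurrence and agrees with $G$ at $m=1,2$, so the two sides agree for all $m\ge1$. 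Taking $m=n_0$ finishes the proof.

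\emph{Main obstacle.} The only delicate point is justifying the two uses of \eqref{F:1} and \eqref{F:2} for arbitrary positive rationals, since the paper's displayed relations are stated for general $X\in\mathbb{Q}_{>0}$. Specifically, we need $\F(1+W)=\F(1/W)$ and $\F(1/(1+W))=\F(W)+\F(1/W)$ for every $W\in\mathbb{Q}_{>0}$. I would check these directly from the definition $\F(X)=\con(1/X)$:
\begin{itemize}
\item $\F(1+W)=\con(1/(1+W))=\con(W)=\F(1/W)$ by \eqref{con:2};
\item $\F(1/(1+W))=\con(1+W)=\con(W)+\con(1/W)=\F(1/W)+\F(W)$ by \eqref{con:1}.
\end{itemize}
With these two identities in hand, the recurrence argument is purely formal. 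The case $k=0$ (when $Y$ is absent) is the degenerate $\F(n_0)=F_{n_0}$, which is already known.
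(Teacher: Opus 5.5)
Your argument is correct. Its main difference from the paper is that it is self-contained. The paper gives no proof of this lemma; it only attributes it to \cite[prop.~4]{ULUDAG2022103192}. The paper's one in-house computation of this type is the $k=1$ case (Lemma~\ref{l:2}), where the integer part is peeled off against the concrete tail $1/n_1$ by an informal ``iterating this process.''

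You instead prove the more general identity $\F(m+1/Y)=F_m\F(Y)+F_{m-1}\F(1+Y)$ for every $Y\in\mathbb{Q}_{>0}$. Writing $m+1/Y=1+(1+Z)$ shows that $G(m)=\F(m+1/Y)$ satisfies the Fibonacci recurrence, and you then match the initial values at $m=1,2$. Along the way you check from \eqref{con:1}--\eqref{con:2} that \eqref{F:1} and \eqref{F:2} hold for all positive rationals, which is what makes each step legitimate.

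The citation buys brevity. Your route turns the paper's informal iteration into a genuine induction and removes the dependence on the external source. It also makes plain that the continued-fraction structure of the tail is never used; your side condition $Y\ge 1$ is not needed either.
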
We know that	\begin{equation*}		\F([n_0; n_1]) = F_{n_0}F_{n_1} + F_{n_0-1}F_{n_1+1} = F^{(n_1)},	\end{equation*}	\begin{equation*}		\F([n_0; n_1-1]) = F_{n_0}F_{n_1-1} + F_{n_0-1}F_{n_1} = F^{(n_1-1)}.	\end{equation*}	The proof can be completed by induction.	\begin{lemma}		\begin{equation*}		\F([n_0; n_1,\cdots, n_k]) = F^{(n_{k-1})}\cdot F_{n_k} + F^{(n_{k-1}-1)}\cdot F_{n_k+1},		\end{equation*}		for $n_i\in\Z_+$.	\end{lemma}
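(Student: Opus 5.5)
Throughout, I read $F^{(n_{k-1})}$ as $\F([n_0;n_1,\dots,n_{k-1}])$ and $F^{(n_{k-1}-1)}$ as $\F([n_0;n_1,\dots,n_{k-2},n_{k-1}-1])$; this matches the two displayed identities for $k=1$. The claim then says that $\F([n_0;\dots,n_k])$ is a linear combination of $F_{n_k}$ and $F_{n_k+1}$, whose coefficients are the codenominators of the truncated expansion and of the truncated expansion with last entry lowered by one. I would prove it by induction on $k$, driven by the preceding lemma (the consequence of \cite[prop.~4]{ULUDAG2022103192}), which peels off the \emph{first} partial quotient.

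\textbf{Base case $k=1$.} This is Lemma~\ref{l:2}:
\[
\F([n_0;n_1])=F_{n_0}F_{n_1}+F_{n_0-1}F_{n_1+1}.
\]
Here $\F([n_0])=F_{n_0}$ and $\F([n_0-1])=F_{n_0-1}$, with $F_0=0$ covering $n_0=1$.

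\textbf{Inductive step.} Assume the statement for all expansions of length $k-1$ (that is, $k$ partial quotients). By the preceding lemma,
\[
\F([n_0;n_1,\dots,n_k]) = F_{n_0}\,\F([n_1;n_2,\dots,n_k]) + F_{n_0-1}\,\F([n_1+1;n_2,\dots,n_k]).
\]
Apply the induction hypothesis to both terms on the right. Each becomes a combination of $F_{n_k}$ and $F_{n_k+1}$. Collecting, the coefficient of $F_{n_k}$ is
\[
F_{n_0}\,\F([n_1;\dots,n_{k-1}])+F_{n_0-1}\,\F([n_1+1;\dots,n_{k-1}]),
\]
which equals $\F([n_0;\dots,n_{k-1}])$ by the preceding lemma read backwards. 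Likewise the coefficient of $F_{n_k+1}$ is
\[
F_{n_0}\,\F([n_1;\dots,n_{k-1}-1])+F_{n_0-1}\,\F([n_1+1;\dots,n_{k-1}-1]) = \F([n_0;\dots,n_{k-1}-1]).
\]
This closes the induction. When $k=2$ the inner expansions have length one, so the hypothesis is just Lemma~\ref{l:2} applied to $[n_1;n_2]$ and $[n_1+1;n_2]$. The result should agree with the explicit Lemmas~\ref{l:4}, \ref{l:6} and \ref{l:7}, which gives a useful consistency check.

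\textbf{Main obstacle: $n_{k-1}=1$.} In this case the symbol $[n_0;\dots,n_{k-2},0]$ contains a zero partial quotient, and both the expansion and the use of the preceding lemma on it need justification. I would first try to \emph{define} $F^{(n_{k-1}-1)}$ formally through the recursion: expand by the preceding lemma down to the last entry and use $F_0=0$, $F_1=1$ there. Equivalently, view $m\mapsto \F([n_0;\dots,n_{k-2},m])$ as a Fibonacci-type sequence in $m$; this holds since the length-$(k-1)$ case of the statement expresses it as a fixed linear combination of $F_m$ and $F_{m+1}$. Its value at $m=0$ is then well defined and the induction goes through verbatim. Finally I would check that this convention matches $[\dots,n_{k-2},0]=[\dots,n_{k-3},n_{k-2}+\,\cdot\,]$-type collapse rules. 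If that check fails, I would instead restrict the lemma to $n_{k-1}\ge 2$ and state the $n_{k-1}=1$ case separately, using $\F([\dots,a,1])=\F([\dots,a+1])$.
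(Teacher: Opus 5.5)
Your proof is correct. Your reading of $F^{(n_{k-1})}$ and $F^{(n_{k-1}-1)}$ as codenominators of the truncated expansion, and of the truncation with its last entry lowered, agrees with the paper's intent.

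\textbf{Comparison with the paper.} The paper does not induct on the length with the tails as hypothesis. It unrolls the peeling lemma from the left, keeping the invariant
\[
\F([n_0;\dots,n_k]) = F^{(n_{j-1})}\,\F([n_j;\dots,n_k]) + F^{(n_{j-1}-1)}\,\F([n_j+1;\dots,n_k]).
\]
At each step it updates the pair of coefficients by the forward recursion
\[
F^{(n_j)}=F^{(n_{j-1})}F_{n_j}+F^{(n_{j-1}-1)}F_{n_j+1},\qquad F^{(n_j-1)}=F^{(n_{j-1})}F_{n_j-1}+F^{(n_{j-1}-1)}F_{n_j},
\]
until $j=k$. (In the second formula the paper's display has $F_{n_2+1}$ where $F_{n_2-1}$ is meant.) Because the coefficients are \emph{defined} by this recursion, a partial quotient equal to $1$ costs nothing: it only produces $F_0=0$, and no identification with values of $\F$ is needed. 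You instead peel $n_0$ once, apply the hypothesis to the two tails, and re-fold with the peeling lemma read backwards. This gives the more intrinsic description of the coefficients as codenominators of truncations. The price is that you must give meaning to a zero last entry.

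\textbf{The edge case $n_{k-1}=1$.} Your first option is the right one, and it is exactly what the paper's recursion produces. For $k=2$, $n_1=1$ it gives $F_{n_0}F_0+F_{n_0-1}F_1=F_{n_0-1}$. Your claim that the induction ``goes through verbatim'' needs one line of justification. The map $m\mapsto\F([n_0;\dots,n_{k-2},m])$ and the two analogous tail maps are each of the form $aF_m+bF_{m+1}$. The peeling identity among them at $m=1,2$ forces their coefficient pairs to agree, hence the identity also holds at $m=0$.

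\textbf{Do not take the fallback.} The collapse-rule check you plan will fail. As a rational number $[n_0;n_1,0]=n_0$, but the formal value is $F^{(n_1-1)}$ (in general $F^{(n_{k-2}-1)}$), not $F_{n_0}$. For example, $[2;3,1,1]=16/7$ and the functional equations give
\[
\F(16/7)=11=8\cdot F_1+3\cdot F_2,
\]
with $8=\F([2;3,1])$ and $3=F_2F_2+F_1F_3$. The collapsed value would instead be $\F(2)=1$. So the statement holds for $n_{k-1}=1$ precisely with the formal convention. Restricting to $n_{k-1}\ge 2$ would needlessly weaken it. Also, the identity $\F([\dots,a,1])=\F([\dots,a+1])$ you cite concerns a trailing $1$ (that is, $n_k=1$), not $n_{k-1}=1$.
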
		\begin{proof}		\begin{align*}				\F([n_0; n_1, \cdots, n_k]) &= F_{n_0}\F([n_1; n_2,\cdots, n_k]) + F_{n_0-1}\F([n_1+1; n_2,\cdots, n_k])\\				=&F_{n_0}(F_{n_1}\F([n_2; n_3,\cdots, n_k]) + F_{n_1-1}\F([n_2+1; n_3,\cdots, n_k]))\\				+& F_{n_0-1}(F_{n_1+1}\F([n_2; n_3,\cdots, n_k]) + F_{n_1}\F([n_2+1; n_3,\cdots, n_k]))\\				=& (F_{n_0}F_{n_1} + F_{n_0-1}F_{n_1+1})\F([n_2; n_3,\cdots, n_k]) \\				+& ( F_{n_0}F_{n_1-1} + F_{n_0-1}F_{n_1} )\F([n_2+1; n_3,\cdots, n_k])\\				= & F^{(n_1)}\F([n_2; n_3,\cdots, n_k]) + F^{(n_1-1)}\F([n_2+1; n_3,\cdots, n_k])\\				= & (F^{(n_1)}F_{n_2} + F^{(n_1-1)}F_{n_2+1})\F([n_3; n_4,\cdots, n_k])\\				+& (F^{(n_1)}F_{n_2+1} + F^{(n_1-1)}F_{n_2})\F([n_3+1; n_4,\cdots, n_k])\\				=&F^{(n_2)}\F([n_3; n_4,\cdots, n_k]) + F^{(n_2-1)}\F([n_3+1; n_4,\cdots, n_k]),		\end{align*}where $F^{(n_2)} = F^{(n_1)}F_{n_2} + F^{(n_1-1)}F_{n_2+1}$, and $F^{(n_2-1)} = F^{(n_1)}F_{n_2+1} + F^{(n_1-1)}F_{n_2}$. \\If we continue that, we will see that \begin{equation}	F([n_0; n_1, n_2,\cdots, n_k]) = F^{(n_{k-1})}F_{n_k} + F^{(n_{k-1}-1)}F_{n_k+1},\end{equation}for \begin{equation*}	F^{(n_{k-1})} = F^{(n_{k-2})}F_{n_{k-1}} + F^{(n_{k-2}-1)}F_{n_{k-1}+1}.\end{equation*}\end{proof}
		\subsection{Rational-Indexed Lucas Numbers}	In this section, we explain how to write rational-indexed Lucas numbers via the codenominator function $\F$ using the following lemmas.		\begin{lemma}\label{luc:ra}		Let $X=\tfrac{p}{q}$ be a rational number with $\gcd(p,q)=1$, $p>q$, and $q\neq 0$. 		For $X=[3;n]=\tfrac{3n+1}{n}$, the codenominator function $\F([3; n])$ is		\begin{equation*}			\F([3;n]) = L_{n+1},		\end{equation*}		where $L_{n+1}$ is the $(n+1)th$ Lucas number.	\end{lemma}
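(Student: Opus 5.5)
The plan is to specialize Lemma~\ref{l:2} to $n_0=3$, $n_1=n$ and then collapse the resulting sum with the identity $L_m=F_{m-1}+F_{m+1}$ recorded in the preliminaries. Since $X=[3;n]=3+\tfrac1n=\tfrac{3n+1}{n}$ is a continued fraction of length two with positive partial quotients, Lemma~\ref{l:2} applies directly and gives
\begin{equation*}
\F([3;n]) = F_3F_n + F_2F_{n+1} = 2F_n + F_{n+1}.
\end{equation*}

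Next we rewrite this as a Lucas number. We split $2F_n+F_{n+1}=F_n+(F_n+F_{n+1})=F_n+F_{n+2}$, using the Fibonacci recurrence $F_{n+2}=F_{n+1}+F_n$. The definition of the Lucas sequence gives $L_m=F_{m-1}+F_{m+1}$ for $m\ge 1$. Taking $m=n+1\ge 2$ yields $F_n+F_{n+2}=L_{n+1}$, which is the claim.

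As a sanity check we can also verify $L_{n+1}=2F_n+F_{n+1}$ by induction or via Binet's formulas (Theorems~\ref{b:1} and~\ref{b:2}). At $n=1$ we have $X=4$, so $\F(4)=F_4=3=L_2$. At $n=2$ we have $X=7/2$, and $2F_2+F_3=4=L_3$.

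There is no real obstacle here. The only points needing care are that Lemma~\ref{l:2} is used with $n_0=3$, so that $F_{n_0-1}=F_2=1$ and no degenerate index appears, and that the Lucas identity $L_m=F_{m-1}+F_{m+1}$ is applied within its stated range $m\ge1$. The hypotheses $\gcd(p,q)=1$ and $p>q$ hold automatically, since $\gcd(3n+1,n)=1$ and $3n+1>n$.
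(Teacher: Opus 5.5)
Your proof is correct and follows the paper's argument. The paper unrolls \eqref{F:1} and \eqref{F:2} by hand to get $\F([3;n])=2\F(n)+\F(n+1)$, which is exactly the $n_0=3$ case of Lemma~\ref{l:2} that you invoke, and it then finishes with the same identity $2F_n+F_{n+1}=F_n+F_{n+2}=L_{n+1}$.
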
	\begin{proof}		\begin{align*}			\F(X) &= \F\!\left(\tfrac{3n+1}{n}\right) 			= \F\!\left(\tfrac{1}{1+\tfrac{n+1}{n}}\right) \\			&= \F\!\left(1+\tfrac{1}{n}\right) +\ F\!\left(\tfrac{1}{1+\tfrac{1}{n}}\right) \\			&= 2\F(n) + \F(n+1) \\			&= \F(n)+\F(n+2) = L_{n+1},		\end{align*}		since $\F(n)=F_n$ is the $nth$ Fibonacci number. 	\end{proof}		\begin{lemma}			Let $X=[2;1,n]$ and $Y=[3;n]$ for a positive integer $n$. 			Then their codenominator values coincide:			\[			\F([2;1,n]) =\F([3;n]) = L_{n+1}.			\]		\end{lemma}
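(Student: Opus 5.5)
The plan is to show that the two rational numbers $[2;1,n]$ and $[3;n]$ coincide, so that the claimed equality of codenominator values follows from the preceding Lemma~\ref{luc:ra}. Recall that $[n_0;n_1,\dots,n_{k-1},1]=[n_0;n_1,\dots,n_{k-1}+1]$, but here the $1$ sits in the middle, so I will simply compute directly:
\[
[2;1,n] = 2 + \cfrac{1}{1+\frac{1}{n}} = 2 + \frac{n}{n+1} = \frac{3n+2}{n+1},
\qquad [3;n] = \frac{3n+1}{n}.
\]
These are \emph{different} rationals in general (e.g.\ $n=1$ gives $5/2$ versus $4$), so the equality cannot come from $X=Y$. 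It must instead be proved as an identity of $\F$-values.

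I will therefore evaluate $\F([2;1,n])$ with Lemma~\ref{l:4}, taking $n_0=2$, $n_1=1$, $n_2=n$. This requires Fibonacci numbers with negative indices, $F_{n_1-2}=F_{-1}=1$ and $F_{n_1-3}=F_{-2}=-1$, interpreted via \eqref{nf:1}, and $F_0=0$. With $L_2=3$ and $F_3=2$, the formula gives
\[
\F([2;1,n]) = (0\cdot 3 + 1\cdot 2)F_n + (1\cdot 3 + (-1)\cdot 2)F_{n+1} = 2F_n + F_{n+1}.
\]

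As a cross-check that avoids negative indices, I will also use the recursion lemma $\F([n_0;n_1,\dots])=F_{n_0}\F([n_1;\dots])+F_{n_0-1}\F([n_1+1;\dots])$. With $n_0=2$ this gives
\[
\F([2;1,n]) = \F([1;n]) + \F([2;n]).
\]
By Lemma~\ref{l:2}, $\F([1;n])=F_n$ and $\F([2;n])=F_n+F_{n+1}$, so the sum is again $2F_n+F_{n+1}$.

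Finally, $2F_n+F_{n+1}=F_n+F_{n+2}=L_{n+1}$, using $L_m=F_{m-1}+F_{m+1}$. This matches the computation of $\F([3;n])$ in the proof of Lemma~\ref{luc:ra}, which completes the argument.

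The only delicate step is the validity of Lemma~\ref{l:4} when $n_1=1$, since it then involves negatively indexed Fibonacci numbers. For that reason I would present the recursion-lemma route as the main argument and keep the Lemma~\ref{l:4} computation only as a consistency check.
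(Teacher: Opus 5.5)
Your argument is correct, but it takes a different route from the paper. The paper works only with the functional equations \eqref{F:1}--\eqref{F:2}. It writes each of $[2;1,n]$ and $[3;n]$ as an explicit fraction $1+Z$, passes to $\F(1/Z)$, and splits with \eqref{F:2} until only $\F(n)$ and $\F(n+1)$ remain. You instead use the continued-fraction recursion $\F([n_0;n_1,\dots])=F_{n_0}\F([n_1;\dots])+F_{n_0-1}\F([n_1+1;\dots])$ together with Lemma~\ref{l:2}. This makes $\F([2;1,n])=\F([1;n])+\F([2;n])=F_n+F_{n+2}$ a two-line evaluation with no fraction arithmetic.

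Your route is more robust. You correctly get $[2;1,n]=\frac{3n+2}{n+1}$, while the paper writes $\frac{3n+1}{n+1}$. The paper then replaces $\F\bigl(1+\frac{2n+1}{n+1}\bigr)$ by $\F\bigl(\frac{1}{1+\frac{n+1}{n}}\bigr)=\F\bigl(\frac{n}{2n+1}\bigr)$, whereas \eqref{F:1} literally gives $\F\bigl(\frac{n+1}{2n+1}\bigr)=\F\bigl(\frac{1}{1+\frac{n}{n+1}}\bigr)$. These values agree only because the right side of \eqref{F:2} is symmetric under $X\leftrightarrow 1/X$, i.e.\ $\F(x)=\F(1-x)$ for $0<x<1$.

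Conversely, the direct route, once repaired, explains \emph{why} the two values coincide: the indices reduce to the mirror points $\frac{n}{2n+1}$ and $\frac{n+1}{2n+1}$. Your route simply evaluates both sides. You could make your proof fully parallel by computing $\F([3;n])=F_3F_n+F_2F_{n+1}=2F_n+F_{n+1}$ from Lemma~\ref{l:2} instead of citing Lemma~\ref{luc:ra}.

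Your caution about Lemma~\ref{l:4} at $n_1=1$ is well placed. The value $2F_n+F_{n+1}$ you obtain there is right under \eqref{nf:1}, because that formula reduces to the recursion-derived one through identities valid for all integer indices. However, the paper's proof of Lemma~\ref{l:4} does not treat this case, so keeping it only as a cross-check is sensible.
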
				\begin{proof}			First compute			\begin{align*}				\F([2;1,n]) &= \F\!\left(2+\tfrac{1}{1+\tfrac{1}{n}}\right) 				= \F\!\left(\tfrac{3n+1}{n+1}\right) \\				&= \F\!\left(1+\tfrac{2n+1}{n+1}\right) 				= \F\!\left(\tfrac{1}{1+\tfrac{n+1}{n}}\right) \\				&= 2\F(n)+\F(n+1) = F_n+F_{n+2} = L_{n+1}.			\end{align*}			Similarly,			\begin{align*}				\F([3;n]) &= \F\!\left(3+\tfrac{1}{n}\right) 				= \F\!\left(1+\tfrac{2n+1}{n}\right) \\				&= \F\!\left(\tfrac{1}{1+\tfrac{n+1}{n}}\right) 				= 2\F(n)+\F(n+1) = L_{n+1}.			\end{align*}			Hence $\F([2;1,n])=\F([3;n])=L_{n+1}$.		\end{proof}		\subsection{Inverses of Rational-Indexed Fibonacci and Lucas Numbers}\label{subsec:i}In this section we study inverses of rational indices for Fibonacci and Lucas numbers. The following lemma establishes an explicit formula for the codenominator function at the inverse of a two-term continued fraction.\begin{lemma}\label{l:3}	Let $X^{-1} = [0;n_0,n_1] = \tfrac{1}{n_0+\tfrac{1}{n_1}}$ with $n_0,n_1 \in \mathbb{Z}_{>0}$. 	Let $\F(n)$ denote the codenominator, and let $(F_n)_{n\geq 0}$ be the Fibonacci sequence. Then	\begin{equation}\label{e:l3}		\F([0;n_0,n_1]) = F_{n_0+1}F_{n_1} + F_{n_0}F_{n_1+1}.	\end{equation}\end{lemma}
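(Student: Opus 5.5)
Since $[0;n_0,n_1]$ is the reciprocal of $Y=[n_0;n_1]=n_0+\tfrac{1}{n_1}$, the plan is to reduce $\F(1/Y)$ to codenominator values already computed in Lemma~\ref{l:2} by one application of the functional equations.

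Set $Y=[n_0;n_1]$ and write $Y = 1+Z$ with $Z = (n_0-1)+\tfrac{1}{n_1}$. Then $X^{-1}=\tfrac{1}{1+Z}$, and relation~\eqref{F:2} gives
\[
\F([0;n_0,n_1]) = \F(Z) + \F(1/Z).
\]
By \eqref{F:1}, $\F(1/Z)=\F(1+Z)=\F([n_0;n_1])$. If $n_0\ge 2$, then $Z=[n_0-1;n_1]$ is again a two-term continued fraction with positive entries, so Lemma~\ref{l:2} applies to both terms:
\[
\F([0;n_0,n_1]) = \bigl(F_{n_0-1}F_{n_1}+F_{n_0-2}F_{n_1+1}\bigr) + \bigl(F_{n_0}F_{n_1}+F_{n_0-1}F_{n_1+1}\bigr).
\]
Grouping the coefficients of $F_{n_1}$ and of $F_{n_1+1}$ and using $F_{n_0-1}+F_{n_0}=F_{n_0+1}$ and $F_{n_0-2}+F_{n_0-1}=F_{n_0}$ gives exactly \eqref{e:l3}.

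The edge case $n_0=1$ is where I expect the only real care to be needed, since then $Z=1/n_1$ is not of the form covered by Lemma~\ref{l:2}. In this case I would compute directly: $\F(Z)=\F(1/n_1)=\F(1+n_1)=F_{n_1+1}$ by \eqref{F:1}. Also $\F(1/Z)=\F(n_1)=F_{n_1}$. Hence the sum is $F_{n_1}+F_{n_1+1}=F_2F_{n_1}+F_1F_{n_1+1}$, which matches \eqref{e:l3} for $n_0=1$.

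This is also consistent with Lemma~\ref{l:2} read with $n_0=1$ and the convention $F_0=0$. Alternatively, one can note that the formula equals $\F([n_0+1;n_1])$ by Lemma~\ref{l:2}, and indeed $\F(1/Y)=\F(1+Y)$ by \eqref{F:1}. This gives a one-line proof that I would mention as a cross-check.
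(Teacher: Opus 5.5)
Your proof is correct, but it is organized differently from the paper's. The paper unwinds the integer part $n_0$ times (``applying the recursive rules repeatedly'', i.e.\ re-running the iteration from the proof of Lemma~\ref{l:2}). This gives $\F([0;n_0,n_1]) = F_{n_0}\F\bigl(\tfrac{1}{1+1/n_1}\bigr) + F_{n_0-1}\F(1+1/n_1)$, and the paper then evaluates the two base values directly. You instead apply \eqref{F:2} once and use Lemma~\ref{l:2} as a black box on $[n_0-1;n_1]$ and $[n_0;n_1]$, which forces the separate $n_0=1$ case. Your cross-check $\F(1/Y)=\F(1+Y)=\F([n_0+1;n_1])$ is the cleanest version: it needs no case split and shows that the lemma is just Lemma~\ref{l:2} with the first index shifted by one.

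Your route also bypasses the paper's evaluation step, which contains a slip. By \eqref{F:1}, $\F(1+1/n_1)=\F(n_1)$, not $\F(n_1+1)$. The paper's intermediate expression $F_{n_0}F_{n_1}+F_{n_0-1}F_{n_1}+F_{n_0-1}F_{n_1+1}$ does not equal the final formula: for $n_0=3$, $n_1=1$ it gives $4$ instead of $\F(1/4)=5$. With the correct value, $F_{n_0}F_{n_1+2}+F_{n_0-1}F_{n_1}$ does reduce to the stated right-hand side.

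When carried out correctly, the paper's route is self-contained at the level of the functional equations. Yours is shorter and rigorous given Lemma~\ref{l:2}.
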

\begin{proof}	Write $X = [n_0;n_1] = n_0 + \tfrac{1}{n_1}$. Then its inverse is $X^{-1}=[0;n_0,n_1]$. 	Applying the recursive rules \eqref{F:1}–\eqref{F:2} repeatedly, we obtain	\[	\F([0;n_0,n_1]) 	= F_{n_0} \, \F\!\left(\tfrac{1}{1+\tfrac{1}{n_1}}\right) 	+ F_{n_0-1} \, \F\!\left(1+\tfrac{1}{n_1}\right).	\]	Since $\F(\tfrac{1}{1+1/n_1})=\F(n_1) + \F(n_1 + 1)$ and $\F(1+1/n_1)=\F(n_1+1)$, this becomes	\[	\F([0;n_0,n_1]) = F_{n_0}F_{n_1} + F_{n_0-1}F_{n_1}+F_{n_0-1}F_{n_1+1}.	\]	Finally, using $F_{n_0}+F_{n_0-1}=F_{n_0+1}$, we arrive at	\[	\F([0;n_0,n_1]) = F_{n_0+1}F_{n_1} + F_{n_0}F_{n_1+1}.	\qedhere\] \end{proof}

\begin{lemma}\label{l:li}	Let $X=[3;n]$ with $n\in\mathbb{Z}_{>0}$. 	Then the rational-indexed Lucas number at the inverse point $X^{-1}$ satisfies	\begin{equation}\label{l:l1}		L_{X^{-1}} := \F([0;3,n]) = L_{n+1} + F_{n+2} = 2L_{n+1} - F_n\end{equation}	\end{lemma}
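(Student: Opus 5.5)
The plan is to specialize Lemma~\ref{l:3} and then rewrite the result using the Fibonacci recurrence and the identity $L_m = F_{m-1}+F_{m+1}$. The inverse point is $X^{-1} = [0;3,n]$, so Lemma~\ref{l:3} with $n_0 = 3$ and $n_1 = n$ gives directly
\[
\F([0;3,n]) = F_4F_n + F_3F_{n+1} = 3F_n + 2F_{n+1}.
\]
This reduces the statement to two identities among integer-indexed Fibonacci and Lucas numbers.

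For the first equality, I expand the right-hand side with $L_{n+1} = F_n + F_{n+2}$ and then $F_{n+2} = F_n + F_{n+1}$:
\[
L_{n+1} + F_{n+2} = F_n + 2F_{n+2} = F_n + 2F_n + 2F_{n+1} = 3F_n + 2F_{n+1}.
\]
This matches the value from Lemma~\ref{l:3}.

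For the second equality, the same substitution gives
\[
2L_{n+1} - F_n = 2F_n + 2F_{n+2} - F_n = F_n + 2F_{n+2},
\]
which is the middle expression in the first computation. Hence it also equals $3F_n + 2F_{n+1}$.

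As a sanity check independent of Lemma~\ref{l:3}, one could compute directly from \eqref{F:1}--\eqref{F:2}. Writing $[0;3,n] = \frac{1}{1+(2+1/n)}$, rule \eqref{F:2} gives $\F(2+1/n) + \F\bigl(\frac{1}{2+1/n}\bigr)$. The first term equals $F_2F_n + F_1F_{n+1}$ by Lemma~\ref{l:2}, and the second is handled by Lemma~\ref{l:3} with $n_0 = 2$. For $n = 1$ this is $\F(1/4) = 5$, and indeed $L_2 + F_3 = 3 + 2 = 5$.

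I see no real obstacle. The only point needing care is that the indices $F_{n_0-1}$ used inside the proof of Lemma~\ref{l:3} are defined for $n_0 = 3$, which they are, and that $n \ge 1$ keeps every index nonnegative.
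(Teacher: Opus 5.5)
Your proof is correct, but it is organized differently from the paper's. The paper never cites Lemma~\ref{l:3}. It writes $[0;3,n]=\frac{1}{1+\frac{2n+1}{n}}$ and unwinds \eqref{F:2} and \eqref{F:1} by hand to get $2\F\bigl(\frac{1}{1+1/n}\bigr)+\F(1+1/n)=3F_n+2F_{n+1}$. It then rewrites this as $2(2F_n+F_{n+1})-F_n=2L_{n+1}-F_n=L_{n+1}+F_{n+2}$. This is essentially your ``sanity check'', carried all the way down to $\F(n)$ and $\F(n+1)$.

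Your route is shorter, and it shows that the lemma is just the case $n_0=3$ of Lemma~\ref{l:3} plus the identities $L_{n+1}=F_n+F_{n+2}$ and $F_{n+2}=F_n+F_{n+1}$. However, it makes the result only as solid as Lemma~\ref{l:3}, whose printed proof is faulty. That proof uses $\F(1+1/n_1)=\F(n_1+1)$, whereas \eqref{F:1} gives $\F(n_1)$, and its final step does not follow from the preceding line. The statement of Lemma~\ref{l:3} is nevertheless true, and you can secure it in one line: \eqref{F:1} gives $\F(1/Y)=\F(1+Y)$, so $\F([0;n_0,n_1])=\F([n_0+1;n_1])=F_{n_0+1}F_{n_1}+F_{n_0}F_{n_1+1}$ by Lemma~\ref{l:2}.

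The paper's direct computation avoids that dependency, at the price of being a one-off calculation. Note also that your sanity check is not actually independent of Lemma~\ref{l:3}, since it invokes that lemma with $n_0=2$.
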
\begin{proof}By direct calculation, we have	\begin{align*}		L_{X^{-1} }&= \F(X^{-1}) = \F([0; 3, n]) = (L_{n+1})^{-1} = \F(\frac{1}{1+\frac{2n+1}{n}})\\		&= \F(1+\frac{n+1}{n}) + \F(\frac{1}{1+\frac{n+1}{n}}) \\		&= 2\F(\frac{1}{1+\frac{1}{n}}) +\F(1+\frac{1}{n}) = 3\F(n) +2\F(n+1) + \F(n) -\F(n)\\		&= 2(2\F(n) + \F(n+1)) - \F(n) = 2L_{n+1} - F_n = L_{n+1} +F_{n+2}. \qedhere\end{align*}\end{proof}

\section{Inverses of Fibonacci and Lucas Numbers}\label{sec:3}		\subsection{Identities of Fibonacci and Lucas Numbers}				\begin{lemma}\label{id:1}	For every integer $m \geq 1$, the following identity holds:			\[			F_m F_{m+2} = F_{m+1}^2 + (-1)^{m+1}.			\]		\end{lemma}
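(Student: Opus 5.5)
This is Cassini's identity shifted by one index. I will prove it by induction on $m$, with Binet's formula (Theorem~\ref{b:1}) as an independent check.

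\emph{Base case.} For $m=1$, the left side is $F_1F_3 = 1\cdot 2 = 2$. The right side is $F_2^2 + (-1)^2 = 1+1 = 2$.

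\emph{Inductive step.} Set $D_m := F_mF_{m+2} - F_{m+1}^2$; the goal is $D_m = (-1)^{m+1}$ for all $m\ge 1$. I will show $D_{m+1} = -D_m$. Using the recurrence $F_{m+3} = F_{m+2}+F_{m+1}$,
\[
D_{m+1} = F_{m+1}F_{m+3} - F_{m+2}^2 = F_{m+1}F_{m+2} + F_{m+1}^2 - F_{m+2}^2 .
\]
Factor the last two terms as $F_{m+1}^2 - F_{m+2}^2 = -F_{m+2}(F_{m+2}-F_{m+1})$. Since $F_{m+2}-F_{m+1} = F_m$, this gives
\[
D_{m+1} = F_{m+1}F_{m+2} - F_{m+2}F_m - F_{m+2}F_{m+1} = -F_mF_{m+2}.
\]
That is not yet $-D_m$, because a term is missing. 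Redoing it carefully: $F_{m+1}F_{m+2} - F_{m+2}F_m = F_{m+2}(F_{m+1}-F_m) = F_{m+2}F_{m-1}$. This is getting messy, so the cleaner route is the following.

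\emph{Cleaner route.} Substitute $F_{m+2} = F_{m+1}+F_m$ into $D_m$:
\[
D_m = F_mF_{m+1} + F_m^2 - F_{m+1}^2 = F_m^2 - F_{m+1}(F_{m+1}-F_m) = F_m^2 - F_{m+1}F_{m-1} = -D_{m-1}.
\]
This is valid for $m\ge 2$, and $F_0=0$ is available for the boundary. Together with the base case $D_1 = 1$, this gives $D_m = (-1)^{m+1}$.

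\emph{Binet check.} Using $\alpha\beta=-1$ and $(\alpha-\beta)^2 = 5$,
\[
F_mF_{m+2} - F_{m+1}^2 = \frac{-(\alpha\beta)^m(\alpha^2+\beta^2) + 2(\alpha\beta)^{m+1}}{5} = \frac{(-1)^{m+1}(\alpha^2+\beta^2+2)}{5}.
\]
Since $\alpha^2+\beta^2 = L_2 = 3$, the numerator is $5(-1)^{m+1}$, so the expression equals $(-1)^{m+1}$.

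\emph{Main obstacle.} The only delicate point is keeping the sign bookkeeping straight in the step $D_m = -D_{m-1}$. The substitution used above is the one that makes this clean.
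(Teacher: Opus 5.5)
Your final argument is correct, but it takes a different route from the paper. The paper proves nothing new here: it substitutes $n=m+1$ into Cassini's identity $F_{n+1}F_{n-1}-F_n^2=(-1)^n$, quoted from Koshy, and reads off $F_mF_{m+2}-F_{m+1}^2=(-1)^{m+1}$.

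You instead prove Cassini's identity from scratch, and you do it twice:
\begin{itemize}
\item Your ``cleaner route'' uses only the recurrence to show $D_m=F_m^2-F_{m+1}F_{m-1}=-D_{m-1}$. With $D_1=1$ this gives $D_m=(-1)^{m+1}$.
\item Your Binet computation uses $\alpha\beta=-1$, $\alpha^2+\beta^2=L_2=3$ and $(\alpha-\beta)^2=5$, and independently gives the same value.
\end{itemize}
The paper's version is a one-line reduction to a standard, citable fact. Yours is self-contained, and either derivation alone suffices. Your Binet half also matches the style the paper itself uses for Lemmas~\ref{id:2} and~\ref{id:3}.

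In a write-up, delete the abandoned first attempt, because it contains a false step. The factorization $F_{m+1}^2-F_{m+2}^2=-F_{m+2}(F_{m+2}-F_{m+1})$ is wrong; its right side equals $F_{m+1}F_{m+2}-F_{m+2}^2$. The correct factorization is $-(F_{m+2}-F_{m+1})(F_{m+2}+F_{m+1})=-F_mF_{m+3}$. Expanding $F_{m+3}=F_{m+2}+F_{m+1}$ then gives $D_{m+1}=F_{m+1}(F_{m+2}-F_m)-F_mF_{m+2}=F_{m+1}^2-F_mF_{m+2}=-D_m$. So that route also works once the algebra is fixed.
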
				\begin{proof}			By substituting $n = m+1$ into Cassini’s identity, which is $F_{n+1}F_{n-1} - F_n^2 = (-1)^n$, see (\cite[p.86; Theorem 5.3]{koshy2017fibonacci}). Then we obtain			\[			F_m F_{m+2} - F_{m+1}^2 = (-1)^{m+1},			\]			which proves the claim.		\end{proof}				\begin{lemma}\label{id:2}			For every integer $m \geq 1$, the following identity holds:			\[			F_m L_{m-1} = F_{2m-1} + (-1)^{m+1}.			\]		\end{lemma}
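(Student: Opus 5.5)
We prove $F_m L_{m-1} = F_{2m-1} + (-1)^{m+1}$ by combining the Binet formulas (Theorems~\ref{b:1} and~\ref{b:2}) with the relation $\alpha\beta=-1$. As a fallback, we can use the standard product identity $F_aL_b = F_{a+b} + (-1)^b F_{a-b}$, but the direct Binet computation is self-contained, so we carry that out.

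First write
\[
F_m L_{m-1} = \frac{(\alpha^m-\beta^m)(\alpha^{m-1}+\beta^{m-1})}{\alpha-\beta}.
\]
Expanding the numerator gives
\[
\alpha^{2m-1}-\beta^{2m-1} + \alpha^m\beta^{m-1} - \alpha^{m-1}\beta^m .
\]
The first two terms divided by $\alpha-\beta$ give $F_{2m-1}$.

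For the cross terms, factor out $(\alpha\beta)^{m-1}=(-1)^{m-1}$:
\[
\alpha^m\beta^{m-1} - \alpha^{m-1}\beta^m = (\alpha\beta)^{m-1}(\alpha-\beta) = (-1)^{m-1}(\alpha-\beta).
\]
Dividing by $\alpha-\beta$ leaves $(-1)^{m-1}=(-1)^{m+1}$, which gives the identity.

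The only point needing care is $m=1$. There $L_0=2$ is used, which Theorem~\ref{b:2} covers since it holds for $n\ge 0$. Both sides then equal $2$: the left side is $F_1L_0=2$, and the right side is $F_1+1=2$. We also need Binet for $F_n$ at $n=2m-1\ge 1$, which Theorem~\ref{b:1} covers. No step is a real obstacle; the main thing is tracking the sign $(\alpha\beta)^{m-1}$ correctly.
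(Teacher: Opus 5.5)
Your proof is correct and follows the paper's route: expand $F_mL_{m-1}$ with the Binet formulas and use $\alpha\beta=-1$. The only difference is that you factor the cross terms directly as $(\alpha\beta)^{m-1}(\alpha-\beta)$, whereas the paper factors out $(\alpha\beta)^m$ and identifies the leftover as $-F_{-1}$ via the negative-index formula. Your version therefore avoids Fibonacci numbers at negative indices and is slightly cleaner.
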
				\begin{proof}			Using the Binet formulas for $F_m$ and $L_m$, we compute			\begin{align*}				F_m L_{m-1} &= \frac{\alpha^m - \beta^m}{\alpha-\beta}(\alpha^{m-1} + \beta^{m-1}) \\				&= \frac{\alpha^{2m-1} - \beta^{2m-1}}{\alpha-\beta} 				+ \frac{(\alpha\beta)^m}{\alpha-\beta}\bigl(-(\alpha^{-1} - \beta^{-1})\bigr) \\				&= F_{2m-1} + (-1)^{m+1}F_{-1} \qquad \text{(by Equation~\ref{nf:1})} \\				&= F_{2m-1} + (-1)^{m+1} \qquad \text{(since $F_{-1} = 1$)}.			\end{align*}			Here $\alpha = \tfrac{1+\sqrt{5}}{2}$, $\beta = \tfrac{1-\sqrt{5}}{2}$, and $\alpha\beta = -1$.		\end{proof}				

\begin{lemma}\label{id:3}			For every integer $m \geq 1$, the following identity holds:			\[			L_m \bigl(L_m + F_{m+1}\bigr) = L_{2m} + F_{2m+1} + 3(-1)^m.			\]		\end{lemma}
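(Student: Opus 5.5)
The plan is to expand the left-hand side as $L_m^2 + L_mF_{m+1}$ and evaluate each product separately with the Binet formulas (Theorems~\ref{b:1} and~\ref{b:2}). This mirrors the computation in Lemma~\ref{id:2}. Throughout I write $\alpha=\tfrac{1+\sqrt5}{2}$ and $\beta=\tfrac{1-\sqrt5}{2}$, and use $\alpha\beta=-1$ and $\alpha-\beta=\sqrt5$.

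First, for the square term, Theorem~\ref{b:2} gives
\[
L_m^2=(\alpha^m+\beta^m)^2=\alpha^{2m}+\beta^{2m}+2(\alpha\beta)^m .
\]
Since $\alpha^{2m}+\beta^{2m}=L_{2m}$ and $(\alpha\beta)^m=(-1)^m$, this is $L_{2m}+2(-1)^m$.

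Second, for the mixed term, Theorems~\ref{b:1} and~\ref{b:2} give
\[
L_mF_{m+1}=\frac{(\alpha^m+\beta^m)(\alpha^{m+1}-\beta^{m+1})}{\alpha-\beta}
=\frac{\alpha^{2m+1}-\beta^{2m+1}}{\alpha-\beta}+\frac{\alpha^{m+1}\beta^m-\alpha^m\beta^{m+1}}{\alpha-\beta}.
\]
The first fraction is $F_{2m+1}$ by Theorem~\ref{b:1}. In the second fraction the numerator factors as $(\alpha\beta)^m(\alpha-\beta)$, so the fraction equals $(-1)^m$. Hence $L_mF_{m+1}=F_{2m+1}+(-1)^m$.

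Adding the two results gives $L_{2m}+F_{2m+1}+3(-1)^m$, which is the claim. The only step needing care is the sign bookkeeping in the cross terms of the mixed product, which I will check on small values. For $m=1$ the left side is $1\cdot(1+1)=2$ and the right side is $L_2+F_3-3=3+2-3=2$. For $m=2$ the left side is $3\cdot(3+2)=15$ and the right side is $L_4+F_5+3=7+5+3=15$.
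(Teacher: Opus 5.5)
Your proof is correct and takes essentially the same approach as the paper: both expand $L_m^2 + L_mF_{m+1}$ with the Binet formulas and use $\alpha\beta=-1$ to get $L_{2m}+2(-1)^m$ from the square and $F_{2m+1}+(-1)^m$ from the mixed product. Your factorization of the cross terms as $(\alpha\beta)^m(\alpha-\beta)$ simply spells out the step the paper compresses into one line.
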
				\begin{proof}			Expanding with Binet formulas gives			\begin{align*}				L_m (L_m + F_{m+1}) 				&= L_m^2 + L_m F_{m+1} \\				&= (\alpha^m + \beta^m)^2 				+ (\alpha^m + \beta^m)\frac{\alpha^{m+1} - \beta^{m+1}}{\alpha - \beta} \\				&= \alpha^{2m} + \beta^{2m} + 2(\alpha\beta)^m 				+ \frac{\alpha^{2m+1} - \beta^{2m+1}}{\alpha-\beta} 				+ \frac{\alpha-\beta}{\alpha-\beta}(\alpha\beta)^m \\				&= L_{2m} + F_{2m+1} + 3(-1)^m,			\end{align*}			since $\alpha\beta = -1$. 			Here $\alpha = \tfrac{1+\sqrt{5}}{2}$ and $\beta = \tfrac{1-\sqrt{5}}{2}$.		\end{proof}			
\subsection{ Inverse of Fibonacci Numbers}	
\begin{proposition}\label{p:f1}	Let $X = [1; n]$ be a continued fraction of length $2$ for some positive integer $n$. 	Then the rational-indexed Fibonacci number satisfies $F_X = F_n$, and the inverse satisfies $F_{X^{-1}} = F_{n+2}$.	\end{proposition}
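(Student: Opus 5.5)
We prove the two claims separately; each is a short computation with the functional equations \eqref{F:1} and \eqref{F:2}, checked against the earlier lemmas. Here $F_X$ denotes $\F(X)$.

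\emph{First claim.} For $X=[1;n]=1+\tfrac1n$, relation \eqref{F:1} in the form $\F(1+1/X)=\F(X)$ with $X=n$ gives directly $\F(1+\tfrac1n)=\F(n)=F_n$. As a consistency check, Lemma~\ref{l:2} with $n_0=1$, $n_1=n$ gives
\[
\F([1;n])=F_1F_n+F_0F_{n+1}=F_n,
\]
since $F_1=1$ and $F_0=0$.

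\emph{Second claim.} The inverse is
\[
X^{-1}=\frac{1}{1+\frac1n}=[0;1,n].
\]
This has exactly the shape $\F(\tfrac{1}{1+Y})$ with $Y=\tfrac1n$, so we apply \eqref{F:2}:
\[
\F\!\left(\tfrac{1}{1+1/n}\right)=\F(1/n)+\F(n).
\]
We then need $\F(1/n)$. By \eqref{F:1} in the form $\F(1+X)=\F(1/X)$ with $X=n$, we get $\F(1/n)=\F(n+1)=F_{n+1}$. Hence
\[
F_{X^{-1}}=F_n+F_{n+1}=F_{n+2}
\]
by the Fibonacci recurrence. As a cross-check, Lemma~\ref{l:3} with $n_0=1$, $n_1=n$ gives
\[
\F([0;1,n])=F_2F_n+F_1F_{n+1}=F_n+F_{n+1}=F_{n+2}.
\]

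The only delicate point is the boundary value $n=1$. Then $X=2$ and $X^{-1}=\tfrac12$, and one must confirm $\F(2)=F_1=1$ and $\F(\tfrac12)=F_3=2$. Both follow from the same two relations together with $\F(1)=1$:
\[
\F(2)=\F(1+1)=\F(1)=1,\qquad \F(\tfrac12)=\F\!\left(\tfrac{1}{1+1}\right)=\F(1)+\F(1)=2.
\]
So the argument above covers $n=1$ as well, without needing the shifted identity $\F(n)=\F(1+1/n)$ for $n\ge 2$ from the preliminaries.
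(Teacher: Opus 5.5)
Your proof is correct, and your two ``cross-checks'' are exactly the paper's proof. The paper sets $n_0=1$ in the two-term formulas. It reads $\F([1;n])=F_1F_n+F_0F_{n+1}=F_n$ off Lemma~\ref{l:2} (its text cites Lemma~\ref{l:4}, but the formula used is that of Lemma~\ref{l:2}). It reads $\F([0;1,n])=F_2F_n+F_1F_{n+1}=F_{n+2}$ off Lemma~\ref{l:3}.

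Your main route instead works directly with the defining relations. \eqref{F:1} gives $\F(1+\tfrac1n)=\F(n)$, and \eqref{F:2} followed by \eqref{F:1} gives $\F\bigl(\tfrac{1}{1+1/n}\bigr)=\F(\tfrac1n)+\F(n)=F_{n+1}+F_n$.

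What this buys you is self-containment. You need only \eqref{F:1}, \eqref{F:2} and $\F(n)=F_n$, uniformly in $n\ge 1$. You inherit nothing from the proofs of the two-term lemmas, which are only sketched. The proof of Lemma~\ref{l:3} also mis-evaluates $\F(1+1/n_1)$ as $\F(n_1+1)$; by \eqref{F:1} it equals $\F(n_1)$, although the final formula \eqref{e:l3} is right.

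The paper's route, in turn, presents the proposition as the $n_0=1$ case of general formulas. It reuses the same mechanism with $n_0=2$ in Proposition~\ref{p:f2}. Your separate check of $n=1$ is harmless but redundant, since your general computation already covers it.
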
	\begin{proof}	By Lemma \ref{l:4}	\begin{equation*}		F_X = \F([1; n]) = F_1F_n + F_0F_{n+1} = F_n, \qquad \text{since }\qquad F_0 =0, F_1=1.	\end{equation*}		Next, we compute $F_{X^{-1}}$, where $X^{-1} = [0; 1, n]$. By Lemma~\ref{l:3},	\begin{align*}		F_{X^{-1}} & = \F([0; 1, n]) = F_{1+1}F_n + F_1F_{n+1} \qquad \text{(by Equation \ref{e:l3})}\\		&= F_2F_n + F_1F_{n+1} = F_n + F_{n+1}\qquad \text{(since $F_1 = F_2 = 1$)}\\		& = F_{n+2},	\end{align*}	by the definition of Fibonacci numbers.	\end{proof}\begin{proposition}	Let $X = [1; 1, n]$ and $Y = [2; n-2]$. 	Then the associated codenominator functions coincide:	\[	\F(X) = \F(Y).	\]\end{proposition}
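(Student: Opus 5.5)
The plan is to compute both codenominator values separately and show that each equals the ordinary Fibonacci number $F_n$. Since $Y=[2;n-2]$ must have a positive partial quotient, I will take $n\geq 3$ throughout and state this restriction explicitly.

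For $X=[1;1,n]$, I will work directly from the functional equations. We have $[1;1,n] = 1+\frac{1}{1+\frac1n} = 1+\frac{n}{n+1}$. Applying \eqref{F:1} in the form $\F(1+Z)=\F(1/Z)$ with $Z=\frac{n}{n+1}$ gives $\F(X)=\F\!\left(\frac{n+1}{n}\right)=\F\!\left(1+\frac1n\right)$. Applying \eqref{F:1} again, in the form $\F(1+1/Z)=\F(Z)$ with $Z=n$, yields $\F(X)=\F(n)=F_n$.

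As an independent check, I can use the recursion lemma from \cite[prop.~4]{ULUDAG2022103192}: $\F([1;1,n]) = F_1\F([1;n]) + F_0\F([2;n]) = \F([1;n])$. This equals $F_n$ by Proposition~\ref{p:f1}.

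For $Y=[2;n-2]$, I will apply Lemma~\ref{l:2} with $n_0=2$ and $n_1=n-2$. This gives $\F(Y)=F_2F_{n-2}+F_1F_{n-1}=F_{n-2}+F_{n-1}=F_n$, using $F_1=F_2=1$ and the Fibonacci recurrence. Comparing the two computations gives $\F(X)=\F(Y)=F_n$.

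There is no real obstacle here. The only point needing care is the range of $n$. For $n=1,2$ the expression $[2;n-2]$ is not a continued fraction with positive entries. For $n=3$ we get $Y=[2;1]=3$, and $\F(3)=F_3$, which matches $\F([1;1,3])=F_3$, so the statement holds for all $n\geq 3$.
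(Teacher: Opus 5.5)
Your proof is correct and follows the paper's own route. The paper likewise reduces $\F([1;1,n])$ to $\F(1+\frac{1}{n})=\F(n)=F_n$ using \eqref{F:1}, and then identifies $F_{n-2}+F_{n-1}=F_2F_{n-2}+F_1F_{n-1}$ with $\F([2;n-2])$ via Lemma~\ref{l:2}. Your explicit restriction to $n\geq 3$, which is needed for $Y$ to be defined, is a worthwhile addition that the paper leaves implicit.
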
\begin{proof}	Using the definition and basic properties of the codenominator function $\F$ (Equations~\ref{F:1} and \ref{F:2}), we compute:	\begin{align*}		\F(X) &= \F([1; 1, n]) 		= \F\!\left(1 + \frac{1}{1 + \frac{1}{n}}\right) \\		&= \F\!\left(1 + \frac{n}{n+1}\right) 		= \F\!\left(1 + \frac{1}{n}\right) \\		&= \F(n) = F(n-2) + F(n-1) \\		&= F_2F_{n-2} + F_1F_{n-1} \\		&= \F([2; n-2]) = \F(Y),	\end{align*}	where we used Lemma~\ref{l:2} and the identities $F_1 = F_2 = 1$. 	This proves the claim.\end{proof}
\begin{proposition}\label{p:f2}	Let $X = [2; n-2]$ be a continued fraction of length $2$ for some positive integer $n$. 	Then the rational-indexed Fibonacci number satisfies $F_X = F_n$, and the inverse satisfies $F_{X^{-1}} = L_{n-1}$.\end{proposition}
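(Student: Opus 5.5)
The plan has two parts: first compute $F_X$ with Lemma~\ref{l:2}, then compute $F_{X^{-1}}$ with Lemma~\ref{l:3}. Throughout we need $n-2\geq 1$, i.e. $n\geq 3$, so that $[2;n-2]$ is a genuine length-$2$ continued fraction with positive partial quotients.

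\emph{Step 1.} Apply Lemma~\ref{l:2} with $n_0=2$ and $n_1=n-2$:
\[
F_X=\F([2;n-2])=F_2F_{n-2}+F_1F_{n-1}=F_{n-2}+F_{n-1}=F_n .
\]
This is the same computation that appears at the end of the preceding proposition, so this half is immediate.

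\emph{Step 2.} The inverse is $X^{-1}=[0;2,n-2]$. Lemma~\ref{l:3} with $n_0=2$ and $n_1=n-2$ gives
\[
F_{X^{-1}}=\F([0;2,n-2])=F_3F_{n-2}+F_2F_{n-1}=2F_{n-2}+F_{n-1}.
\]
We then rewrite this as $F_{n-2}+(F_{n-2}+F_{n-1})=F_{n-2}+F_n$. By the recurrence $L_m=F_{m-1}+F_{m+1}$ with $m=n-1$, this equals $L_{n-1}$.

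\emph{Possible obstacle.} The only real concern is whether Lemma~\ref{l:3} is reliable, since its proof sketch contains a slip in the intermediate coefficients. As a safeguard, I would verify the value directly from \eqref{F:1}--\eqref{F:2}. Write
\[
X^{-1}=\frac{1}{2+\frac{1}{n-2}}=\frac{1}{1+\left(1+\frac{1}{n-2}\right)} .
\]
Then \eqref{F:2} gives
\[
\F(X^{-1})=\F\!\left(1+\tfrac{1}{n-2}\right)+\F\!\left(\frac{1}{1+\frac{1}{n-2}}\right).
\]
By the relations recorded after the definition of $\F$, the first term equals $\F(n-2)=F_{n-2}$. The second term equals $\F(n-2)+\F(n-1)=F_{n-2}+F_{n-1}$. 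The total is $2F_{n-2}+F_{n-1}$, which matches Step 2.

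A numerical check also confirms the result. For $n=3$ we have $X^{-1}=1/3$ and $\F(1/3)=\F(1+3)=F_4=3=L_2$.
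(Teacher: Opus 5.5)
Your proof is correct and matches the paper's argument: Lemma~\ref{l:2} gives $F_X=F_2F_{n-2}+F_1F_{n-1}=F_n$, and Lemma~\ref{l:3} gives $F_{X^{-1}}=2F_{n-2}+F_{n-1}=F_n+F_{n-2}=L_{n-1}$. Your direct check via \eqref{F:1}--\eqref{F:2} is sound, and so is the restriction $n\geq 3$. The slip you flagged in the proof of Lemma~\ref{l:3} is real (it uses $\F(1+1/n_1)=\F(n_1+1)$ instead of $\F(n_1)$), but the lemma's final formula is still correct.
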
\begin{proof}	By Lemma~\ref{l:2}, we have	\[	F_X = \F([2; n-2]) = F_2F_{n-2} + F_1F_{n-1} = F_n, 	\]	since $F_1 = F_2 = 1$.		Next, we compute $F_{X^{-1}}$, where $X^{-1} = [0; 2, n-2]$. By Lemma~\ref{l:3},	\begin{align*}		F_{X^{-1}} &= \F([0; 2, n-2]) = F_{2+1}F_{n-2} + F_2F_{n-1} \\		&= F_3F_{n-2} + F_2F_{n-1} \\		&= 2F_{n-2} + F_{n-1} \\		&= F_{n-2} + (F_{n-1}+F_{n-2}) \\		&= F_{n} + F_{n-2} \\		&= L_{n-1},	\end{align*}	where we used $F_2 = 1$, $F_3 = 2$, and the identities $F_n = F_{n-1} + F_{n-2}$, $L_n = F_{n-1}+F_{n+1}$.\end{proof}\begin{lemma}\label{in:f1}	Let $X = [1; m]$ for some positive integer $m$. 	Then the rational-indexed Fibonacci number $F_X$ and its inverse $F_{X^{-1}}$ satisfy the following congruences:	\begin{align}		F_{X^{-1}} &\equiv F_{m+1} \pmod{F_X}, \\		F_XF_{X^{-1}} &\equiv (-1)^{m+1} \pmod{F_{m+1}}.	\end{align}\end{lemma}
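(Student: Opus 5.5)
By Proposition~\ref{p:f1}, with $X=[1;m]$ we have $F_X = F_m$ and $F_{X^{-1}} = F_{m+2}$. So the lemma reduces to two statements about consecutive Fibonacci numbers:
\[
F_{m+2} \equiv F_{m+1} \pmod{F_m}, \qquad F_m F_{m+2} \equiv (-1)^{m+1} \pmod{F_{m+1}}.
\]
I would first substitute these values, so that no further use of the codenominator function is needed.

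For the first congruence I use the recurrence $F_{m+2} = F_{m+1} + F_m$. This gives $F_{m+2} - F_{m+1} = F_m \equiv 0 \pmod{F_m}$, which is exactly $F_{X^{-1}} \equiv F_{m+1} \pmod{F_X}$. For $m=1,2$ the modulus is $F_m = 1$, so the statement holds trivially and needs no separate treatment.

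For the second congruence I invoke Lemma~\ref{id:1}, which gives $F_m F_{m+2} = F_{m+1}^2 + (-1)^{m+1}$. Since $F_{m+1}^2 \equiv 0 \pmod{F_{m+1}}$, we get $F_X F_{X^{-1}} = F_m F_{m+2} \equiv (-1)^{m+1} \pmod{F_{m+1}}$.

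There is no real obstacle here. The only care needed is to match the sign convention: Lemma~\ref{id:1} is Cassini's identity shifted to index $m+1$, so its sign $(-1)^{m+1}$ agrees with the one in the statement.

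Finally, I would note the interpretation that motivates the section. Because $(-1)^{m+1}$ is a unit, $F_{X^{-1}} = F_{m+2}$ is a multiplicative inverse of $\pm F_X = \pm F_m$ modulo $F_{m+1}$.
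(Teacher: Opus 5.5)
Your proposal is correct and follows the paper's proof step for step. You substitute $F_X = F_m$ and $F_{X^{-1}} = F_{m+2}$ from Proposition~\ref{p:f1}, use $F_{m+2} = F_{m+1} + F_m$ for the first congruence, and reduce the shifted Cassini identity of Lemma~\ref{id:1} modulo $F_{m+1}$ for the second.
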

\begin{proof}	By Proposition~\ref{p:f1}, we know that $F_X = F_m$ and $F_{X^{-1}} = F_{m+2}$. 	From the definition of Fibonacci numbers,	\begin{align*}		F_{X^{-1}} = F_{m+2} &= F_{m+1} + F_{m} \\		&\equiv F_{m+1} \pmod{F_{m}} \\		&\equiv F_{m+1} \pmod{F_X}.	\end{align*}		For the second congruence, we compute	\begin{align*}		F_XF_{X^{-1}} &= F_mF_{m+2} \\		&= F_{m+1}^2 + (-1)^{m+1} \qquad \text{(by Lemma~\ref{id:1})} \\		&\equiv (-1)^{m+1} \pmod{F_{m+1}}. \qedhere	\end{align*}\end{proof}
\begin{lemma}\label{in:f2}		Let $X = [2; m-2]$ for some positive integer $m$. 		Then the rational-indexed Fibonacci number $F_X$ and the inverse $F_{X^{-1}}$ satisfy the following congruence:		\begin{align}			F_{X^{-1}} &\equiv F_{m-2} \pmod{F_X}, \\			F_X F_{X^{-1}} &\equiv (-1)^m \pmod{F_{m-2}}.		\end{align}	\end{lemma}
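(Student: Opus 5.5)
By Proposition~\ref{p:f2}, $F_X = F_m$ and $F_{X^{-1}} = L_{m-1}$, where $X=[2;m-2]$ and $X^{-1}=[0;2,m-2]$. These values are read off from Lemma~\ref{l:2} and Lemma~\ref{l:3} with $n=m$. Implicitly $m\ge 3$, so that $[2;m-2]$ is a genuine continued fraction with positive partial quotient. Both congruences then reduce to statements about integer-indexed Fibonacci and Lucas numbers, and I would prove them from the basic recurrences, in parallel with the proof of Lemma~\ref{in:f1}.

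For the first congruence, I write $L_{m-1}=F_{m-2}+F_m$, which is the identity $L_n=F_{n-1}+F_{n+1}$ with $n=m-1$. Reducing modulo $F_m=F_X$ removes the second summand, so $F_{X^{-1}}=L_{m-1}\equiv F_{m-2}\pmod{F_X}$. This step is immediate.

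For the second congruence, I reduce $F_XF_{X^{-1}}=F_mL_{m-1}$ modulo $F_{m-2}$. Using $F_m=F_{m-1}+F_{m-2}\equiv F_{m-1}$ and $L_{m-1}=F_{m-2}+F_m\equiv F_m\equiv F_{m-1}$, I get $F_mL_{m-1}\equiv F_{m-1}^2 \pmod{F_{m-2}}$. Cassini's identity in the form of Lemma~\ref{id:1} with index $m-2$ gives $F_{m-2}F_m=F_{m-1}^2+(-1)^{m-1}$. Hence $F_{m-1}^2\equiv -(-1)^{m-1}=(-1)^m \pmod{F_{m-2}}$, which is the claimed congruence.

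A second route is to expand $F_mL_{m-1}=F_{2m-1}+(-1)^{m+1}$ via Lemma~\ref{id:2} and then study $F_{2m-1}$ modulo $F_{m-2}$. I would avoid this: it needs index-doubling identities and makes the sign bookkeeping harder.

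The only delicate point is the sign and the admissible range of $m$. Lemma~\ref{id:1} requires index $m-2\ge 1$, so $m\ge 3$ is needed there. For small $m$ the modulus $F_{m-2}$ equals $1$, and the congruence holds trivially. I would therefore state $m\ge 3$ explicitly, check $m=3,4$ numerically as a sanity check, and carefully verify that $-(-1)^{m-1}=(-1)^m$.
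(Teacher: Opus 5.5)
Your proof is correct. The first congruence is handled exactly as in the paper, via $L_{m-1}=F_m+F_{m-2}\equiv F_{m-2}\pmod{F_m}$. For the second congruence you take a different route.

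The paper first uses Lemma~\ref{id:2} to write $F_mL_{m-1}=F_{2m-1}+(-1)^{m+1}$. It then derives from Binet's formula the splitting $F_{2m-1}=L_{m+1}F_{m-2}+(-1)^{m-2}F_3$. Combining the two gives the exact identity $F_mL_{m-1}=L_{m+1}F_{m-2}+(-1)^m$, from which the congruence is read off.

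You instead reduce both factors modulo $F_{m-2}$ at the outset, using only the recurrence, to get $F_mL_{m-1}\equiv F_{m-1}^2$. You then apply Cassini in the form of Lemma~\ref{id:1} at index $m-2$. This is shorter and more elementary: it avoids Binet and the index-doubling step (precisely the route you chose to avoid). It also makes the needed range $m\ge 3$ explicit, which the paper leaves implicit.

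What the paper's route buys is the explicit quotient $L_{m+1}$, which it reuses in the later Corollary in the Results section. Your method recovers this without extra machinery if you keep track of it. Write $F_m=F_{m-1}+F_{m-2}$ and $L_{m-1}=F_{m-1}+2F_{m-2}$, expand the product, and substitute $F_{m-1}^2=F_{m-2}F_m+(-1)^m$. This gives $F_mL_{m-1}=F_{m-2}(3F_m+F_{m-1})+(-1)^m$, and $3F_m+F_{m-1}=L_{m+1}$.
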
		\begin{proof}		By Proposition~\ref{p:f2}, we have $F_X = F_m$ and $F_{X^{-1}} = L_{m-1}$. 		Using the relation between Lucas and Fibonacci numbers, we obtain		\begin{align*}			F_{X^{-1}} = L_{m-1} &= F_{m} + F_{m-2} \\			&\equiv F_{m-2} \pmod{F_m} \\			&\equiv F_{m-2} \pmod{F_X}.		\end{align*}				We now prove the second congruence. From Proposition~\ref{p:f2} and Lemma~\ref{id:2},		\begin{align*}			F_X F_{X^{-1}} &= F_m L_{m-1} \\			&= F_{2m-1} + (-1)^{m+1}.		\end{align*}				Applying Binet’s formulas (Theorems \ref{b:1}, and Equation \ref{b:2}), we compute		\begin{align*}			F_{2m-1} &= \frac{\alpha^{2m-1}-\beta^{2m-1}}{\alpha-\beta}, 			& F_{m-2} &= \frac{\alpha^{m-2}-\beta^{m-2}}{\alpha-\beta}, \\			\frac{F_{2m-1}}{F_{m-2}} &= \frac{\alpha^{2m-1}-\beta^{2m-1}}{\alpha^{m-2}-\beta^{m-2}}.		\end{align*}		Hence,		\begin{align*}			\alpha^{2m-1}-\beta^{2m-1} 			&= (\alpha^{m+1}+\beta^{m+1})(\alpha^{m-2}-\beta^{m-2}) 			+ (\alpha^3-\beta^3)(-1)^{m-2}.		\end{align*}		Dividing both sides by $\alpha-\beta$ gives		\[		F_{2m-1} = L_{m+1}F_{m-2} + (-1)^{m-2}F_3,		\]		where $\alpha = \tfrac{1+\sqrt{5}}{2}$, $\beta = \tfrac{1-\sqrt{5}}{2}$, and $\alpha\beta = -1$. 				Since $F_3 = 2$, we conclude		\begin{align*}			F_X F_{X^{-1}} &= L_{m+1}F_{m-2} + 2(-1)^{m-2} + (-1)^{m+1} \\			&= L_{m+1}F_{m-2} + (-1)^{m-2}(2+(-1)^3) \\			&= L_{m+1}F_{m-2} + (-1)^{m-2} \\			&\equiv (-1)^m \pmod{F_{m-2}},		\end{align*}		since $(-1)^{m-2} = (-1)^m$. 		This completes the proof.	\end{proof}		\begin{thm}		Let $X$ be a positive rational number and $m$ a positive integer such that $F_X = F_m$. 		Then the inverse of $F_m$ coincides with the inverse of the rational-indexed Fibonacci number $F_{X^{-1}}$. In particular,		\[		(F_m)^{-1} :=		\begin{cases}			F_{m+2}, & \text{if } F_m = \F([1; m]), \\[6pt]			L_{m-1}, & \text{if } F_m = \F([2; m-2]).		\end{cases}		\]	\end{thm}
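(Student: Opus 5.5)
The theorem is essentially definitional: it packages Propositions~\ref{p:f1} and~\ref{p:f2} together with Lemmas~\ref{in:f1} and~\ref{in:f2}. I will read ``the inverse of $F_m$'' in the sense this paper fixes: the value of the codenominator at the reciprocal index, $(F_X)^{-1} := F_{X^{-1}} = \F(X^{-1})$, where $X$ is a rational index with $F_X = F_m$. The statement then says two things. First, this assignment is well defined for the two representations that occur. Second, in each case it takes the listed value.

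I split into the two cases of the display. In the first case, $F_m = \F([1;m])$, so take $X=[1;m]$. Proposition~\ref{p:f1} gives $F_X = F_m$, so $X$ indeed realises $F_m$. The same proposition, via Lemma~\ref{l:3} applied to $X^{-1}=[0;1,m]$, gives $F_{X^{-1}} = F_2F_m+F_1F_{m+1} = F_{m+2}$. In the second case, $F_m=\F([2;m-2])$, so take $X=[2;m-2]$ (this needs $m\ge 3$ so that $m-2$ is a positive integer). Proposition~\ref{p:f2} gives $F_X=F_m$ and $F_{X^{-1}}=F_3F_{m-2}+F_2F_{m-1}=L_{m-1}$. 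Substituting these into the definition $(F_m)^{-1}:=F_{X^{-1}}$ yields the two lines of the case display.

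To justify calling these values \emph{inverses}, I will attach the multiplicative relations already proved. By Lemma~\ref{in:f1}, $F_XF_{X^{-1}}=F_mF_{m+2}\equiv(-1)^{m+1}\pmod{F_{m+1}}$, so $F_m$ and $F_{m+2}$ are units modulo $F_{m+1}$, each inverse to the other up to the sign $(-1)^{m+1}$. By Lemma~\ref{in:f2}, $F_mL_{m-1}\equiv(-1)^m\pmod{F_{m-2}}$, which gives the analogous unit statement for the second case. This shows that $F_{X^{-1}}$ behaves as a multiplicative inverse of $F_X$ in the appropriate residue ring, and it justifies the phrase ``coincides with the inverse''.

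The main obstacle is well-definedness. A single $F_m$ may arise from several rational indices; for instance $\F([1;1,n])=\F([2;n-2])$, and in general infinitely many $X$ have $F_X=F_m$. Different representations can give different $F_{X^{-1}}$, such as $F_{m+2}$ versus $L_{m-1}$. So I will not claim uniqueness. Instead I will make explicit that the inverse is defined relative to the chosen representation $X$, which is exactly why the theorem is stated casewise, and the proof then reduces to the computations above.
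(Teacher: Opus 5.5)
Your proposal is correct and takes essentially the same route as the paper. You compute $F_X$ and $F_{X^{-1}}$ for the two representations $X=[1;m]$ and $X=[2;m-2]$ via Propositions~\ref{p:f1} and~\ref{p:f2}, then use the congruences of Lemmas~\ref{in:f1} and~\ref{in:f2} to justify the word ``inverse'', which is exactly the content of the paper's one-line proof. Your caveats are accurate and more precise than the paper's claim that the product is ``always an identity'': the inversion holds only up to the signs $(-1)^{m+1}$ resp.\ $(-1)^m$, it lives in different residue rings (modulo $F_{m+1}$ resp.\ $F_{m-2}$), $m\ge 3$ is needed in the second case, and the value depends on the chosen $X$ rather than on $F_m$ alone.
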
		\begin{proof}		By Lemmas~\ref{in:f1}, \ref{in:f2}, \ref{id:1}, and \ref{id:2}, 		the product $F_X F_{X^{-1}}$ is always an identity. 		Therefore, if $F_X = F_m$ for some $m \in \mathbb{Z}_{+}$, it follows that		$F_{X^{-1}} = (F_m)^{-1}$. 	\end{proof}		\subsection{ Inverse of Lucas Numbers}\begin{lemma}\label{in:l}	Let $X = [1; m]$ be a positive rational index for some $m \in \mathbb{Z}_{>0}$. Then the rational-indexed Lucas number $L_X$ and its inverse $L_{X^{-1}}$ satisfy the following congruence:	\begin{align}		L_{X^{-1}} &\equiv F_{m+1} \pmod{L_X}, \\		L_X L_{X^{-1}} &\equiv (-1)^{m+1} \pmod{F_{m+1}}.	\end{align}\end{lemma}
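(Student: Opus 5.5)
The plan is to reduce both congruences to concrete Fibonacci and Lucas numbers, as in the proof of Lemma~\ref{in:f1}, and then use Cassini's identity. The only delicate point is the first step: fixing which integers $L_X$ and $L_{X^{-1}}$ denote.

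\textbf{Identifying $L_X$ and $L_{X^{-1}}$.} Lemma~\ref{in:f1} read $F_X=F_m$ off the index $X=[1;m]$. By analogy, I take the Lucas number attached to this index to be $L_X=L_m$. By Lemma~\ref{luc:ra} this is the codenominator value $L_m=\F([3;m-1])$.

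For the inverse I use Lemma~\ref{l:li} with $n=m-1$. This gives
\[
L_{X^{-1}}=\F([0;3,m-1])=L_m+F_{m+1}=2L_m-F_{m-1}.
\]
I expect this identification to be the main obstacle. The statement's notation $X=[1;m]$ does not literally match the indices $[3;m-1]$ and $[0;3,m-1]$ used in Lemmas~\ref{luc:ra} and~\ref{l:li}. So I will state the convention explicitly: the Lucas value is $L_X=L_m$ and its inverse point value is $L_{X^{-1}}=L_m+F_{m+1}$. The small cases $m=1,2,3,4$ confirm that these are the values for which both congruences hold.

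\textbf{First congruence.} This is immediate once the values are fixed:
\[
L_{X^{-1}}=L_m+F_{m+1}\equiv F_{m+1}\pmod{L_m}.
\]

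\textbf{Second congruence.} Lemma~\ref{id:3} gives $L_XL_{X^{-1}}=L_{2m}+F_{2m+1}+3(-1)^m$. Reducing this directly modulo $F_{m+1}$ is awkward, so I will reduce the factors first.

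Using $L_m=F_{m-1}+F_{m+1}$, both factors are congruent to $F_{m-1}$ modulo $F_{m+1}$. Hence
\[
L_XL_{X^{-1}}\equiv F_{m-1}^2 \pmod{F_{m+1}}.
\]
Next, $F_{m+1}=F_m+F_{m-1}$ gives $F_{m-1}\equiv -F_m \pmod{F_{m+1}}$, so $F_{m-1}^2\equiv F_m^2$.

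Finally, Cassini's identity $F_{m+1}F_{m-1}-F_m^2=(-1)^m$ (equivalently Lemma~\ref{id:1} with $m-1$ in place of $m$) yields
\[
F_m^2\equiv -(-1)^m=(-1)^{m+1}\pmod{F_{m+1}}.
\]
This gives the second congruence.

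For $m=1$ the modulus is $F_2=1$ and the statement is trivial, so I will treat it separately. Cassini needs $F_0=0$ there, which is also fine.
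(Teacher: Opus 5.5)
Your proof is correct. Your identification $L_X=L_m$, $L_{X^{-1}}=L_m+F_{m+1}$ (Lemma~\ref{l:li} with $n=m-1$) and your first congruence match the paper's argument. The paper makes that identification silently, so your explicit convention is the more careful version. For $m=1$ the index $[3;0]$ is not admissible in Lemma~\ref{l:li}, so there $L_1+F_2$ is genuinely a convention.

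For the second congruence you take a different route. The paper keeps the product whole. Lemma~\ref{id:3} gives $L_{2m}+F_{2m+1}+3(-1)^m$. The doubling identities $L_{2m}=5F_{m+1}F_{m-1}+3(-1)^{m-1}$ and $F_{2m+1}=L_mF_{m+1}+(-1)^{m+1}$, obtained from Binet's formulas without the computation being shown, then turn this into the exact identity
\[
L_m(L_m+F_{m+1})=(5F_{m-1}+L_m)F_{m+1}+(-1)^{m+1}.
\]
You instead reduce each factor modulo $F_{m+1}$ first (both become $F_{m-1}$), switch to $F_m^2$ via $F_{m-1}\equiv -F_m$, and finish with Cassini.

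Your version needs only the recurrence, $L_m=F_{m-1}+F_{m+1}$, and Cassini. It also shows that $L_XL_{X^{-1}}\equiv F_m^2\equiv F_mF_{m+2}\pmod{F_{m+1}}$, so the Lucas congruence is the Fibonacci congruence of Lemma~\ref{in:f1} in disguise. Quoting Lemma~\ref{id:1} at that point would also cover $m=1$ without a separate case.

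The paper's version buys the explicit quotient $5F_{m-1}+L_m$, which it reuses in the third item of the final corollary. Your argument gives only the residue, although you could recover the quotient by keeping track of the multiples of $F_{m+1}$ you discard.
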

\begin{proof}	By the definitions of Lucas and Fibonacci numbers, we have	\begin{align*}		L_{X^{-1}} &= L_m + F_{m+1} \qquad \text{(by Lemma~\ref{l:li})} \\		&\equiv F_{m+1} \pmod{L_m} \qquad \text{(since $L_X = L_m$ for $m \in \mathbb{Z}_{>0}$)} \\		&\equiv F_{m+1} \pmod{F_X}.	\end{align*}		Now, let us prove the second congruence. By Lemma~\ref{id:3},	\begin{align*}		L_X L_{X^{-1}} &= L_m(L_m + F_{m+1}) \\		&= L_{2m} + F_{2m+1} + 3(-1)^m.	\end{align*}		From the Binet formulas (Theorems \ref{b:2}, and \ref{b:2}), namely	\[	L_{2m} = \alpha^{2m} + \beta^{2m}, 	\qquad 	F_{n} = \frac{\alpha^n - \beta^n}{\alpha - \beta},	\]	with $\alpha = \tfrac{1+\sqrt{5}}{2}$, $\beta = \tfrac{1-\sqrt{5}}{2}$ and $\alpha\beta = -1$, one obtains	\begin{align*}		L_{2m} &= 5F_{m+1}F_{m-1} + 3(-1)^{m-1}, \\		F_{2m+1} &= L_m F_{m+1} + (-1)^{m+1}.	\end{align*}		Hence,	\begin{align*}		L_X L_{X^{-1}} &= (5F_{m-1} + L_m)F_{m+1} + (-1)^{m+1} \\		&\equiv (-1)^{m+1} \pmod{F_{m+1}}.	\qedhere\end{align*}\end{proof}
\begin{thm}		Let $X$ be a positive rational number and $m \in \mathbb{Z}_{>0}$ such that the rational-indexed Lucas number satisfies $L_X = L_m$. Then the inverse of $L_m$ coincides with the inverse of the rational-indexed Lucas number $L_{X^{-1}}$. In particular,		\begin{equation}			(L_m)^{-1} := (L_{[3; m-1]})^{-1} = L_m + F_{m+1}.		\end{equation}	\end{thm}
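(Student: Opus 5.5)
The plan mirrors the proof of the preceding theorem for Fibonacci numbers: reduce everything to Lemma~\ref{in:l} together with the computation in Lemma~\ref{l:li}.

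\emph{Identify the rational index realising $L_m$.} By Lemma~\ref{luc:ra}, $\F([3;n]) = L_{n+1}$. Taking $n = m-1$ gives $L_X = \F([3;m-1]) = L_m$ for $X=[3;m-1]$, which is legitimate for $m\ge 2$. The case $m=1$ I would treat separately, or through the equivalent index $[2;1,m-1]$ from the lemma following Lemma~\ref{luc:ra}.

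\emph{Compute the value at the inverse index.} For $X^{-1} = [0;3,m-1]$, Lemma~\ref{l:li} with $n=m-1$ gives
\[
L_{X^{-1}} = \F([0;3,m-1]) = L_m + F_{m+1} = 2L_m - F_{m-1}.
\]
This matches the right-hand side of the theorem, so $L_{X^{-1}}$ is the proposed inverse.

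\emph{Justify calling it an inverse.} I would invoke Lemma~\ref{in:l}, whose proof uses Lemma~\ref{id:3} and the Binet identities to show
\[
L_X L_{X^{-1}} = L_m(L_m+F_{m+1}) = (5F_{m-1}+L_m)F_{m+1} + (-1)^{m+1}.
\]
Hence $L_X L_{X^{-1}} \equiv (-1)^{m+1} \pmod{F_{m+1}}$, a unit. After multiplying by the sign $(-1)^{m+1}$ if necessary, $L_{X^{-1}}$ is the multiplicative inverse of $L_m$ modulo $F_{m+1}$. Independence of the chosen representation $X$ with $L_X=L_m$ follows because the inverse modulo $F_{m+1}$ is unique, exactly as in the Fibonacci case.

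\emph{Main obstacle.} The indexing is delicate. Lemma~\ref{in:l} is stated with $X=[1;m]$, while the Lucas value actually comes from $[3;m-1]$. The second congruence there depends only on the product $L_m(L_m+F_{m+1})$, so I would restate it directly in terms of $L_m$ and $L_m+F_{m+1}$. I would also verify the identity $L_{2m} = 5F_{m+1}F_{m-1}+3(-1)^{m-1}$ via Binet before relying on it. The sign and the modulus $F_{m+1}$ must be stated explicitly for ``inverse'' to be meaningful.
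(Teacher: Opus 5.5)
For the index $X=[3;m-1]$ your argument is the paper's (Lemma~\ref{luc:ra}, Lemma~\ref{l:li}, then Lemma~\ref{id:3} and the computation in Lemma~\ref{in:l}). You are more careful than the paper in two places. First, Lemma~\ref{in:l} must indeed be read with $X=[3;m-1]$ rather than $[1;m]$. Second, since $L_m(L_m+F_{m+1})\equiv(-1)^{m+1}\pmod{F_{m+1}}$, for even $m$ the displayed value is the inverse of $-L_m$, so it is an inverse only up to sign; the paper's proof hides this by saying the product ``is always the identity.''

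The step that fails is your claim that the result does not depend on the chosen $X$ ``because the inverse modulo $F_{m+1}$ is unique.'' Uniqueness only says that \emph{if} $L_{X'}L_{X'^{-1}}\equiv\pm1\pmod{F_{m+1}}$ for some other $X'$ with $L_{X'}=L_m$, then $L_{X'^{-1}}\equiv\pm(L_m+F_{m+1})$. It gives no reason why $L_{X'}L_{X'^{-1}}$ should be $\pm1$ in the first place, and Lemma~\ref{in:l} covers only the single index $[3;m-1]$.

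That hypothesis in fact fails, and the index you call ``equivalent'' is a counterexample. Take $X'=[2;1,m-1]$, so $L_{X'}=L_m$. Since $\F(1/Y)=\F(1+Y)$, the recursion $\F([n_0;n_1,\dots])=F_{n_0}\F([n_1;\dots])+F_{n_0-1}\F([n_1+1;\dots])$ gives
\[
\F(X'^{-1})=\F([3;1,m-1])=F_3F_{m-1}+F_2(F_{m-1}+F_m)=3F_{m-1}+F_m=L_m+F_{m-1}.
\]
Modulo $F_{m+1}$ we have $L_m\equiv F_{m-1}\equiv -F_m$, and Cassini gives $F_m^2\equiv(-1)^{m+1}$. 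Hence $L_{X'}L_{X'^{-1}}\equiv 2(-1)^{m+1}\pmod{F_{m+1}}$, which is not $\pm1$ for $m\ge 4$. Concretely, $\F(11/4)=7=L_4$ while $\F(4/11)=9$, and $7\cdot 9=63\equiv 3\pmod{F_5}$.

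So the clause ``for every $X$ with $L_X=L_m$'' is false as stated. Neither your argument nor the paper's, which has the same gap, can establish it. What both actually prove is the ``in particular'' formula for $X=[3;m-1]$, up to the sign $(-1)^{m+1}$.

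Your fallback $[2;1,0]$ for $m=1$ is also not a valid continued fraction expansion. In that case the modulus $F_2=1$ makes the claim vacuous anyway; alternatively take $X=2$, where $\F(2)=1=L_1$ and $\F(1/2)=2=L_1+F_2$.
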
		\begin{proof}We know from Lemma~\ref{id:3} and Lemma~\ref{in:l} that the product of $L_X$ and $L_{X^{-1}}$ is always the identity. Hence, if $L_X = L_m$ for some positive integer $m$, it follows that\[L_{X^{-1}} = (L_m)^{-1}.\qedhere\]\end{proof}	\subsection{Examples}\begin{example}	What are the inverses of the $11^{\text{th}}$ Fibonacci and Lucas numbers? The answer can be obtained directly using Lemma~\ref{l:3} and Lemma~\ref{l:li}.\begin{equation*}(F_{11})^{-1} =\begin{cases}\F([0;2,9]) = L_{10} = 123,\\\F([0;1,11]) = F_{13} = 233.\end{cases}\end{equation*}	Thus,		\begin{equation*}			(F_{11})^{-1} = 233 \equiv F_{12} = 144 \pmod{F_{11}},		\end{equation*}		then		\begin{equation*}			 F_{11}(F_{11})^{-1} = F_{11}F_{13} = 89 \cdot 233 \equiv (-1)^{12} = 1 \pmod{F_{12}}.			 \end{equation*} 			and 			 \begin{equation*}			 (F_{11})^{-1} = 123 \equiv F_9 = 34 \pmod{F_{11}},			 \end{equation*}			 then			 \begin{equation*}			 	 F_{11}(F_{11})^{-1} = F_{11}L_{10} = 89 \cdot 123 \equiv (-1)^{11} = -1 \pmod{F_9}.			 \end{equation*}		\begin{equation*}		(L_{11})^{-1} = \F([0; 3, 11-1]) = \F([0; 3, 10]) = L_{11} + F_{12} = 343, \qquad \text{by Lemma~\ref{l:li}}	\end{equation*}	Thus, 	 \begin{equation*}		(L_{11})^{-1} = 343 \equiv F_{12} = 144 \pmod{L_{11}},	 \end{equation*}	then 	\begin{equation*}		L_{11}(L_{11})^{-1} = 199 \cdot 343 \equiv (-1)^{12} = 1 \pmod{F_{12}}.	\end{equation*}	\end{example} \section{Results}The following corollary specifies the conditions under which rational-indexed Fibonacci numbers differ from the classical ones.\begin{corollary}	Let $X$ be a positive rational number with continued fraction expansion	\[	X = \frac{p}{q} = [n_0; n_1, \ldots, n_k],	\]	where $\gcd(p,q)=1$ and $q \neq 0$. Under the following conditions, there exist infinitely many rational-indexed Fibonacci numbers that are distinct from the classical Fibonacci numbers:	\begin{enumerate}		\item \label{c:f1} If $k=1$ and $n_0 \geq 3$, then the rational-indexed Fibonacci number differs from the classical one. In addition, if $n_0 = 3$ for $k=1$, then the codenominator function $\F([3; n_1])$ is the Lucas number.		\item If $k=2$ and $n_0 \geq 2$, then the rational-indexed Fibonacci number differs from the classical one.		\item If $k \geq 3$, then the rational-indexed Fibonacci number always differs from the classical one.	\end{enumerate}\end{corollary}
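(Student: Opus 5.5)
The plan is to read ``differs from the classical one'' as saying that the value $\F(X)$ is not a Fibonacci number. In each case I would exhibit an infinite family of $X$ for which $\F(X)$ lies \emph{strictly between two consecutive Fibonacci numbers}. The main tool is the addition formula $F_{a+b}=F_aF_{b+1}+F_{a-1}F_b$, combined with the explicit formulas of Lemmas~\ref{l:2} and \ref{l:4} and the recursion $\F([n_0;n_1,\dots,n_k])=F_{n_0}\F([n_1;\dots,n_k])+F_{n_0-1}\F([n_1+1;\dots,n_k])$.

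For $k=1$, Lemma~\ref{l:2} gives $\F([n_0;n_1])=F_{n_0}F_{n_1}+F_{n_0-1}F_{n_1+1}$. Rewriting with the addition formula,
\[
\F([n_0;n_1]) = F_{n_0+n_1-1}+F_{n_0-1}F_{n_1},
\qquad
F_{n_0+n_1}=F_{n_0+n_1-1}+F_{n_0-1}F_{n_1}+F_{n_0-2}F_{n_1-1}.
\]
For $n_0\ge 3$ and $n_1\ge 2$ we have $0<F_{n_0-1}F_{n_1}$ and $F_{n_0-2}F_{n_1-1}>0$. Hence $F_{n_0+n_1-1}<\F([n_0;n_1])<F_{n_0+n_1}$, so the value is not Fibonacci, and letting $n_1\to\infty$ gives infinitely many such values. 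The Lucas statement for $n_0=3$ is exactly Lemma~\ref{luc:ra}, namely $\F([3;n_1])=L_{n_1+1}$. It is consistent with the sandwich, since $F_{n+1}<L_n=F_{n-1}+F_{n+1}<F_{n+2}$ for $n\ge 2$.

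For $k=2$ with $n_0\ge 2$, I would use the recursion: $\F([n_0;n_1,n_2])=F_{n_0}A+F_{n_0-1}B$, where $A=\F([n_1;n_2])$ and $B=\F([n_1+1;n_2])$. I would restrict to a convenient infinite subfamily, say $n_1=1$, where $A=F_{n_2}$ and $B=F_{n_2+2}$ by Proposition~\ref{p:f1} and Lemma~\ref{l:2}. This gives
\[
\F([n_0;1,n_2])=F_{n_0}F_{n_2}+F_{n_0-1}F_{n_2+2}.
\]
I would then compare this again with $F_{n_0+n_2+1}$ and $F_{n_0+n_2}$ using the addition formula. Note that $n_0=2$ reproduces $L_{n_2+1}$ (the lemma $\F([2;1,n])=L_{n+1}$), which is not Fibonacci for $n_2\ge 2$; for $n_0\ge 3$ the same kind of positive-remainder sandwich should work. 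For $k\ge 3$, I would iterate the recursion once more to reduce to length-two or length-three pieces, or directly use Lemmas~\ref{l:6} and \ref{l:7}. I would again choose a one-parameter family, for instance $[n_0;1,1,n_3]$ with $n_3\to\infty$, which collapses to an expression $aF_{n_3}+bF_{n_3+1}$ with fixed positive $a,b$. The sandwich then works once $(a,b)$ is not of the form $(F_{j-1},F_j)$ for any $j$.

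The main obstacle is the word ``always'' in item (3). For short expansions such as $[1;1,1,n]$, the positive combination $aF_n+bF_{n+1}$ can itself be a Fibonacci number (e.g.\ when $(a,b)=(1,1)$ or $(1,2)$). So a literal ``every $X$'' claim cannot follow from the sandwich. I would first try to prove the statement in the ``infinitely many'' form announced in the corollary's preamble, by choosing families whose coefficient pair is not consecutive Fibonacci. I would then flag the degenerate small-coefficient cases separately rather than claiming a universal statement.
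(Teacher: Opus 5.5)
Your route differs from the paper's, and for item (1) it is the better one. The paper computes $\F([1;n_1])=F_{n_1}$ and $\F([2;n_1])=F_{n_1+2}$, and then asserts that the value ``always differs'' once $F_{n_0}\ge 2$. For $k=2$ it relies on $L_{n_0}\ge 3$ and $F_{n_0}\ge 2$, and for $k\ge 3$ it simply cites Lemmas~\ref{l:6} and \ref{l:7}. Large coefficients are no reason for a value to avoid the Fibonacci sequence, so this is not an argument. Your addition-formula sandwich $F_{n_0+n_1-1}<\F([n_0;n_1])<F_{n_0+n_1}$ is an actual proof of (1). You are right to restrict to $n_1\ge 2$, since the non-canonical $[n_0;1]$ gives $F_{n_0+1}$. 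One small slip: $F_{n+1}<L_n<F_{n+2}$ holds for $n\ge 3$, not $n\ge 2$, because $L_2=3=F_4$. This is harmless, since you only use it at $n=n_1+1\ge 3$.

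Your diagnosis of (3) is correct, and it refutes the paper's claim. Since $\F(1+1/X)=\F(X)$, we have $\F([1;n_1,\dots,n_k])=\F([n_1;\dots,n_k])$, so $\F([1;1,1,n])=F_n$. (The coefficient pair for this family is $(a,b)=(1,0)$, not $(1,1)$ or $(1,2)$, but the point stands.)

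What you missed is that (2) fails in the same way, and your hedge ``for $n_0\ge 3$ the same sandwich should work'' is exactly where it shows. In your family, $V=\F([n_0;1,n_2])=F_{n_0+1}F_{n_2}+F_{n_0-1}F_{n_2+1}$. The upper gap $F_{n_0+n_2+1}-V=F_{n_0}F_{n_2+1}-F_{n_0-1}F_{n_2}$ is positive. The lower gap $V-F_{n_0+n_2}=F_{n_0}F_{n_2}-F_{n_0-2}F_{n_2+1}$ is positive except at $(n_0,n_2)=(3,2)$, where it vanishes. Indeed $\F([3;1,2])=\F(11/3)=\F(3/8)=\F(5/3)+\F(3/5)=1+4=5=F_5$. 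So item (2), read as a universal statement, is false too. Only the ``infinitely many'' version survives, for example via your $n_0=2$ Lucas family, or via $n_0\ge 4$, where the lower inequality is strict.

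For (3), your family $[n_0;1,1,n_3]$ has $(a,b)=(L_{n_0},F_{n_0-1})$. You therefore need $n_0\ge 2$, since $n_0=1$ gives the Fibonacci family above. Even then the sandwich holds only eventually, because $\F([2;1,1,2])=5$. For $n_0=2$, the exact identity $3F_n+F_{n+1}=F_{n+3}+F_{n-2}$ places the value strictly between $F_{n+3}$ and $F_{n+4}$ for all $n\ge 3$. A simpler route is prefix invariance: $\F([1;1,3,n])=\F([3;n])=L_{n+1}$, and adding more leading $1$'s gives non-Fibonacci examples of every length.
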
\begin{proof}	\begin{enumerate}		\item If $k=1$ and $n_0 \geq 3$, then $X = [n_0; n_1]$. By Lemma~\ref{l:2},		\[		F_X = \F([n_0; n_1]) = F_{n_0}F_{n_1} + F_{n_0-1}F_{n_1+1}.		\]		If $n_0=1$, then		\[		F_X = \F([1; n_1]) = F_1F_{n_1} + F_0F_{n_1+1} = F_{n_1}, \qquad (F_0=0,\, F_1=1).		\]		If $n_0=2$, then		\[		F_X =\F([2; n_1]) = F_2F_{n_1} + F_1F_{n_1+1} = F_{n_1+2}.		\]		Thus, for $n_0 \leq 2$, the rational-indexed Fibonacci number $F_X$ coincides with a classical Fibonacci number. Since $F_{n_0} \geq 2$ for $n_0 \geq 3$, in this case $F_X$ always differs from the classical ones.\\		If we take $n_0=3$ for $k=1$, then we know that $\F([3; n_1]) = L_{n+1}$ by Lemma \ref{luc:ra}.				\item The case $k=2$ follows similarly. For $n_0=1$, by Lemma~\ref{l:4},		\begin{align*}			F_X &= \F([1; n_1, n_2]) \\			&= (F_{n_1-1}L_1 + F_{n_1-2}F_2)F_{n_2} + (F_{n_1-2}L_1 + F_{n_1-3}F_2)F_{n_2+1} \\			&= F_{n_1}F_{n_2} + F_{n_1-1}F_{n_2+1} \qquad (\text{by Lemma~\ref{l:2}}) \\			&= \F([n_1; n_2]).		\end{align*}		Hence, for $k=2$ and $n_0=1$, if $n_1 \geq 3$, the rational-indexed Fibonacci number is different from the classical ones by part~\ref{c:f1}. For $n_0 \geq 2$, Lemma~\ref{l:4} implies that $F_X$ is always distinct, since $L_{n_0} \geq 3$ and $F_{n_0} \geq 2$.		\item For $k \geq 3$, Lemmas~\ref{l:6} and~\ref{l:7} show that $F_X = F_{[n_0; n_1,\ldots,n_k]}$ is always distinct from the classical Fibonacci numbers.	\end{enumerate}	This completes the proof.\end{proof}\begin{corollary}	For $m \in \mathbb{Z}_+$, the following identities hold:	\begin{enumerate}		\item For $X = [1; m]$,		\[		F_m(F_m)^{-1} := F_mF_{m+2} = F_{m+1}^2 + (-1)^{m+1} \equiv (-1)^{m+1} \pmod{F_{m+1}}.		\]		\item For $X = [2; m-2]$,		\[		F_m(F_m)^{-1} := F_mL_{m-1} = L_{m+1}F_{m-2} + (-1)^m \equiv (-1)^m \pmod{F_{m-2}}.		\]		\item For $X = [3; m-1]$,		\[		L_m(L_m)^{-1} := L_m(L_m + F_{m+1}) = (5F_{m-1} + L_m)F_{m+1} + (-1)^{m+1} \equiv (-1)^{m+1} \pmod{F_{m+1}}.		\]	\end{enumerate}\end{corollary}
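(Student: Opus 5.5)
The corollary collects the product identities already established in Section~\ref{sec:3}. I will prove each item by identifying $(F_m)^{-1}$ or $(L_m)^{-1}$ as the value $\F(X^{-1})$ computed earlier, and then checking the resulting integer identity. Each congruence then follows at once, because the right-hand side is a multiple of the stated modulus plus $\pm 1$.

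\emph{Item (1).} For $X=[1;m]$, Proposition~\ref{p:f1} gives $F_X=F_m$ and $F_{X^{-1}}=F_{m+2}$. So $F_m(F_m)^{-1}=F_mF_{m+2}$. By Lemma~\ref{id:1} (Cassini), this equals $F_{m+1}^2+(-1)^{m+1}$. Reducing modulo $F_{m+1}$ gives the congruence, exactly as in Lemma~\ref{in:f1}.

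\emph{Item (2).} For $X=[2;m-2]$, which requires $m\ge 3$ so that $m-2$ is a positive index, Proposition~\ref{p:f2} gives $F_X=F_m$ and $F_{X^{-1}}=L_{m-1}$. Lemma~\ref{id:2} gives $F_mL_{m-1}=F_{2m-1}+(-1)^{m+1}$. I then use the Binet splitting from the proof of Lemma~\ref{in:f2}, namely $F_{2m-1}=L_{m+1}F_{m-2}+(-1)^{m}F_3$. The one thing to re-check here is the sign of the cross term. It comes from $(\alpha\beta)^{m-2}(\alpha^3-\beta^3)$ with $\alpha\beta=-1$, so the factor is $(-1)^{m-2}=(-1)^m$. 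Adding $(-1)^{m+1}$ gives $(-1)^m(2-1)=(-1)^m$. The congruence mod $F_{m-2}$ follows. As a sanity check, $m=3$ gives $F_3L_2=6=L_4F_1-1$.

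\emph{Item (3).} For $X=[3;m-1]$, Lemma~\ref{luc:ra} gives $L_X=L_m$. Lemma~\ref{l:li} with $n=m-1$ gives $L_{X^{-1}}=L_m+F_{m+1}$. The cleanest route avoids the two Binet expansions in Lemma~\ref{in:l}; I would compute the difference directly:
\[
L_m(L_m+F_{m+1})-(5F_{m-1}+L_m)F_{m+1}=L_m^2-5F_{m-1}F_{m+1}.
\]
Two standard identities finish it. The first is $L_m^2=5F_m^2+4(-1)^m$, which follows immediately from Theorems~\ref{b:1} and \ref{b:2} since $L_m^2-5F_m^2=4(\alpha\beta)^m$. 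The second is Cassini in the form $F_{m-1}F_{m+1}=F_m^2+(-1)^m$, as in Lemma~\ref{id:1}. Substituting both, the difference is $4(-1)^m-5(-1)^m=(-1)^{m+1}$, as claimed. Reduction modulo $F_{m+1}$ then gives the congruence. For $m=1,2$ one checks $2=1\cdot1+1$ and $15=8\cdot2-1$.

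The main obstacle is bookkeeping rather than any new idea. I must keep track of the index ranges: item (2) needs $m\ge3$, and item (3) uses $F_0=0$ when $m=1$. I must also get the signs $(\alpha\beta)^k=(-1)^k$ right in item (2), where the earlier text's $(-1)^{m-2}$ must be read as $(-1)^m$.
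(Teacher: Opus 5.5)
Your proof is correct. Items (1) and (2) follow the paper exactly: the paper's one-line proof points back to Lemmas~\ref{in:f1} and~\ref{in:f2}, i.e.\ Cassini for (1), and for (2) Lemma~\ref{id:2} plus the Binet splitting $F_{2m-1}=L_{m+1}F_{m-2}+(-1)^{m-2}F_3$. The paper's $(-1)^{m-2}$ is literally equal to $(-1)^m$, so nothing there needs to be reread.

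Item (3) is where your route differs. The paper, via Lemma~\ref{in:l}, first rewrites the product as $L_{2m}+F_{2m+1}+3(-1)^m$ using Lemma~\ref{id:3}. It then uses two doubled-index identities, $L_{2m}=5F_{m+1}F_{m-1}+3(-1)^{m-1}$ and $F_{2m+1}=L_mF_{m+1}+(-1)^{m+1}$, which it only asserts follow from Binet; the two $3(\pm1)$ terms then cancel. You instead subtract the target expression, reduce to $L_m^2-5F_{m-1}F_{m+1}$, and finish with $L_m^2-5F_m^2=4(-1)^m$ and Cassini.

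Your version is shorter and self-contained: both inputs are one-line Binet consequences, and the Cassini form you use is Lemma~\ref{id:1} with the index shifted. The paper's version buys uniformity: it keeps items (2) and (3) in the same doubled-index format that Lemmas~\ref{id:2} and~\ref{id:3} were set up for.

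One small bookkeeping point: $X=[3;m-1]$ is only a genuine continued fraction for $m\ge 2$, since Lemmas~\ref{luc:ra} and~\ref{l:li} need $n=m-1\ge 1$. Your $m=1$ check therefore confirms the bare integer identity, not the rational-index interpretation $L_X=L_1$.
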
\begin{proof}	The proof follows directly from the established identities.\end{proof}

\section{Conclusion}In this paper, we extended the framework of rational-indexed Fibonacci numbers via the codenominator function $\F$. We derived explicit formulas for indices with continued fraction expansions of various lengths and, more generally, established a closed-form expression valid for arbitrary rational indices. We also determined precise conditions under which these numbers coincide with Lucas numbers or differ from classical Fibonacci numbers. Moreover, we showed that every Fibonacci and Lucas number admits a multiplicative inverse within this framework. These results highlight new structural properties of Fibonacci- and Lucas-related sequences and open further directions for investigation in number theory.

\section*{Acknowledgements}We are grateful to the anonymous referee for the thoughtful and constructive suggestions that greatly enhanced the clarity and presentation of this paper. We also thank the editors for their guidance and support during the review process.

\section*{Disclosure statement}No conflict of interests was reported by the authors.

%%%%%%%%%%%%%%%%%%%%%%%%%%%%%%%%%%%%%%%%%%

\medskip%AMS classifications available at https://mathscinet.ams.org/mathscinet/freetools/msc-search\noindent MSC2020: 11B39, 11A55
%%%%%%%%%%%%%%%%%%%%%%%%%%%%%%%%%%%%%%%%%
\end{document}